\newcommand{\whp}{whp}
\newcommand{\Whp}{Whp}
\newcommand{\prob}[1]{\mathbb{P}\left[#1\right]}    
\newcommand{\probLarge}[1]{\mathbb{P}\big[#1\big]}    
\newcommand{\condprob}[2]{\mathbb{P}\left[#1 \;\middle|\; #2\right]}
\newcommand{\variance}[1]{\mathbb{V}\left[#1\right]}
\newcommand{\expec}[1]{\mathbb{E}\left[#1\right]}    
\def\Erdos{Erd\H{o}s}
\def\Renyi{R\'enyi}
\def\Luczak{\L{}uczak}
\def\ER{\Erdos--\Renyi}
\def\Bollobas{Bollob\'{a}s}
\newtheorem{thm}{Theorem}[section]
\newtheorem{coro}[thm]{Corollary}
\newtheorem{lem}[thm]{Lemma}
\theoremstyle{remark}
\theoremstyle{definition}
\newtheorem{definition}[thm]{Definition}
\newtheorem*{construction*}{Construction}
\newcommand{\proofof}[1]{\subsection{Proof of \Cref{#1}}}
\crefname{thm}{theorem}{theorems}
\crefname{prop}{proposition}{propositions}
\crefname{coro}{corollary}{corollaries}
\crefname{lem}{lemma}{lemmas}
\crefname{definition}{definition}{definitions}
\crefname{question}{question}{questions}
\crefname{problem}{problem}{problems}
\crefname{conjecture}{conjecture}{conjectures}
\newtheoremstyle{claim}
{}
{}
{\itshape}
{}
{\bf}
{.}
{.5em}
{}
\theoremstyle{claim}
\crefname{claim}{claim}{claims}
\def\N{\mathbb{N}} 
\def\R{\mathbb{R}} 
\def\ur{\in_R} 
\newcommand{\rounddown}[1]{\left\lfloor#1\right\rfloor} 
\def\twoconcentration{D}
\def\bin{\mathcal{B}}
\def\ball{B}
\def\nbins{n} 
\def\nballs{k} 
\def\location{\mathbf{A}} 
\def\locationBit{A} 
\def\loadvector{\mathbf{\lambda}} 
\def\load{\lambda} 
\def\maxload{\lambda^\ast} 
\DeclareMathOperator{\BB}{BB}
\DeclareMathOperator{\MBB}{M}
\newcommand{\binsandballs}[2]{\BB\left(#1, #2\right)}
\newcommand{\maxbinsandballs}[2]{\MBB\left(#1, #2\right)}
\newcommand{\concentration}[2]{\nu\left(#1, #2\right)} 
\newcommand{\specialconcentration}[1]{\nu\left(#1\right)} 
\def\Concentration{\nu}
\def\ntrees{t} 
\def\rootrv{Z} 
\def\rootvectorbit{Y}
\newcommand{\smallo}[1]{o\left(#1\right)}
\newcommand{\bigo}[1]{O\left(#1\right)}
\newcommand{\smallomega}[1]{\omega\left(#1\right)}
\newcommand{\Th}[1]{\Theta\left(#1\right)}
\def\planargraph{P} 
\def\planarclass{\mathcal{P}} 
\def\multigraph{M} 
\def\forest{F} 
\def\forestclass{\mathcal{F}} 
\def\nocomplex{U} 
\def\nocomplexclass{\mathcal{U}} 
\newcommand{\maxdegree}[1]{\Delta\left(#1\right)} 
\newcommand{\maxdegreeLarge}[1]{\Delta\big(#1\big)} 
\newcommand{\largestcomponent}[1]{L_1\left(#1\right)} 
\def\Largestcomponent{L_1} 
\newcommand{\rest}[1]{R\left(#1\right)} 
\def\Rest{R} 
\def\degreesequence{\mathbf{d}}
\newcommand{\degree}[2]{d_{#2}\left(#1\right)} 
\newcommand{\vertexSet}[1]{V\left(#1\right)} 
\newcommand{\edgeSet}[1]{E\left(#1\right)} 
\newcommand{\numberVertices}[1]{v\left({#1}\right)} 
\newcommand{\numberEdges}[1]{e\left({#1}\right)} 
\newcommand{\numberVerticesLarge}[1]{v\big({#1}\big)} 
\newcommand{\numberEdgesLarge}[1]{e\big({#1}\big)} 
\newcommand{\core}[1]{C\left(#1\right)} 
\newcommand{\complexpart}[1]{Q\left(#1\right)} 
\newcommand{\restcomplex}[1]{U\left(#1\right)} 
\newcommand{\restcomplexLarge}[1]{U\big(#1\big)}
\def\Restcomplex{U} 
\def\Complexlargestcore{Q_L} 
\newcommand{\complexlargestcore}[1]{Q_L\left(#1\right)}
\newcommand{\complexlargestcoreLarge}[1]{Q_L\big(#1\big)}
\def\Complexrestcore{Q_S} 
\newcommand{\complexrestcore}[1]{Q_S\left(#1\right)}
\newcommand{\complexrestcoreLarge}[1]{Q_S\big(#1\big)}
\def\complexclass{\mathcal{Q}}
\def\complexgraph{Q}
\def\cl{\mathcal{A}} 
\def\func{\Phi} 
\def\seq{\mathbf{a}} 
\def\term{a} 
\newcommand{\condGraph}[2]{#1 \mid #2} 
\def\randomGraph{A}
\def\property{\mathcal{R}}
\newcommand{\contiguous}[2]{#1 \triangleleft #2} 
\def\prueferseqence{\psi} 
\def\prueferinvers{\psi^{-1}} 
\newcommand{\sequences}[2]{\mathcal{S}\left(#1, #2\right)} 
\newcommand{\frequency}[2]{\#\left(#1, #2\right)} 
\newcommand{\setbuilder}[2]{\left\{#1 \mid #2\right\}} 
\newcommand{\lessorequal}{\hspace{0.06cm}\leq\hspace{0.06cm}}
\newcommand{\greaterorequal}{\hspace{0.06cm}\geq\hspace{0.06cm}}
\newcommand{\equal}{\hspace{0.06cm}=\hspace{0.06cm}}
\newcommand{\greater}{\hspace{0.06cm}>\hspace{0.06cm}}
\newcommand{\defined}{\hspace{0.06cm}:=\hspace{0.06cm}}
\title{Two point concentration of maximum degree in sparse random planar graphs}
\author{Mihyun Kang, Michael Missethan}
\address{Institute of Discrete Mathematics, Graz University of Technology, Steyrergasse 30, 8010 Graz, Austria}
\email{\{kang,missethan\}@math.tugraz.at}
\thanks{Supported by Austrian Science Fund (FWF): I3747 and W1230}
\keywords{Random graphs, random planar graphs, maximum degree, balls into bins, Prüfer sequence}
\begin{document}

\begin{abstract}
Let $\planargraph(n,m)$ be a graph chosen uniformly at random from the class of all planar graphs on vertex set $\left\{1, \ldots, n\right\}$ with $m=m(n)$ edges. We show that in the sparse regime, when $\limsup_{n \to \infty} m/n<1$, with high probability the maximum degree of $\planargraph(n,m)$ takes at most two different values.
\end{abstract}

\maketitle

\section{Introduction and results}\label{sec:intro}

\subsection{Motivation}\label{subsec:motivation}
The \ER\ random graph $G(n,m$), introduced by \Erdos\ and \Renyi\ \cite{erdoes1,erdoes2}, is a graph chosen uniformly at random from the class $\mathcal{G}(n,m)$ of all vertex--labelled simple graphs on vertex set $[n]:=\left\{1, \ldots, n\right\}$ with $m=m(n)$ edges, denoted by $G(n,m)\ur \mathcal{G}(n,m)$. Since its introduction $G(n,m)$, together with the closely related binomial random graph $G(n,p)$, has been intensively studied (see e.g. \cite{rg1, rg2, rg3}). A particularly remarkable feature of this model is the \lq concentration\rq\ of many graph parameters. That is, with high probability (meaning with probability tending to 1 as $n$ tends to infinity, {\em \whp\ } for short) certain graph parameters in $G(n,m)$ lie  in \lq small\rq\ intervals, which only depend on $n$ and $m$.

The graph parameter we will focus on in this paper is the maximum degree of a graph $H$, denoted by $\maxdegree{H}$. \Erdos\ and \Renyi\ \cite{erdoes1} were the first to consider $\maxdegree{G(n,m)}$ and since then, many results on $\maxdegree{G(n,m)}$ and, more generally, the degree sequence of $G(n,m)$ were obtained (see e.g. \cite{max_degree1,max_degree2,max_degree3,degree_sequence1,degree_sequence2,degree_sequence3,vertices_given_degree}). A particular interesting result by \Bollobas\ \cite{vertices_given_degree} is that \whp\ $\maxdegree{G(n,m)}$ is concentrated at two values, provided that $m$ is not too \lq large\rq.

\begin{thm}[{\cite{vertices_given_degree}}]\label{thm:max_degree_ergraph}
Let $m=m(n)=\smallo{n\log n}$ and $G=G(n,m)\ur \mathcal G(n,m)$. Then there exists a $\twoconcentration=\twoconcentration(n)\in \N$ such that \whp\ $\maxdegree{G}\in\left\{\twoconcentration, \twoconcentration+1\right\}$.
\end{thm}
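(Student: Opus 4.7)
The plan is the classical first-moment/second-moment strategy applied to the count of vertices of high degree. For a fixed vertex $v_1\in[n]$, the degree $\degree{v_1}{G}$ in $G=G(n,m)$ follows the hypergeometric distribution
\[
\prob{\degree{v_1}{G}\equal d} \equal \binom{n-1}{d}\binom{\binom{n-1}{2}}{m-d}\bigg/\binom{\binom{n}{2}}{m},
\]
which, since $m\equal \smallo{n\log n}$, is well approximated by a binomial with mean $\mu\defined 2m/n\equal \smallo{\log n}$. Let $X_d \defined |\{v\in[n]:\degree{v}{G}\geq d\}|$ and $\mu_d\defined \expec{X_d}\equal n\prob{\degree{v_1}{G}\geq d}$. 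A routine manipulation of the hypergeometric ratios yields
\[
\frac{\mu_{d+1}}{\mu_d}\equal \bigo{\frac{\mu}{d}},
\]
which tends to $0$ as soon as $d$ is comfortably larger than $\mu$.

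Let $D^{\ast}\defined \max\{d:\mu_d\geq 1\}$. Standard Chernoff estimates for the hypergeometric tail give $D^{\ast}\equal \Theta(\log n/\log(\log n/\mu))$, whence $\mu/D^{\ast}\equal \smallo{1}$. From this we may choose $D\equal D(n)$ with $\mu_D\to\infty$ and $\mu_{D+2}\equal \smallo{1}$: if $\mu_{D^{\ast}}\to\infty$ set $D\defined D^{\ast}$, so that $\mu_{D+2}\leq \bigo{\mu/D^{\ast}}\cdot\mu_{D^{\ast}+1}<\bigo{\mu/D^{\ast}}\equal \smallo{1}$; otherwise $\mu_{D^{\ast}}$ is bounded and we set $D\defined D^{\ast}-1$, so that $\mu_D\geq (D^{\ast}/\bigo{\mu})\cdot \mu_{D^{\ast}}\to\infty$ while $\mu_{D+2}\equal \mu_{D^{\ast}+1}\leq \bigo{\mu/D^{\ast}}\cdot \mu_{D^{\ast}}\equal \smallo{1}$. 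Markov's inequality then yields $\prob{\maxdegree{G}\geq D+2}\leq \mu_{D+2}\equal \smallo{1}$.

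For the matching lower bound I would apply the second moment method to $X_D$. Writing $\expec{X_D^2}\equal \mu_D+\sum_{u\neq v}\prob{\degree{u}{G}\geq D,\ \degree{v}{G}\geq D}$, the joint probability can be evaluated in the hypergeometric model by choosing $d_1$ edges at $u$, $d_2$ edges at $v$, and specifying whether the potential edge $uv$ is one of them. The crucial step is to show that this joint probability equals $(1+\smallo{1})\prob{\degree{v_1}{G}\geq D}^2$, from which $\variance{X_D}\equal \smallo{\mu_D^2}$, and Chebyshev's inequality gives $X_D\geq 1$ \whp, hence $\maxdegree{G}\geq D$ \whp. Combining both bounds, $\maxdegree{G}\in\{D,D+1\}$ \whp.

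The main obstacle is the second moment estimate: because $G(n,m)$ conditions on the total edge count, the degrees of distinct vertices are (negatively) correlated, and the joint upper tails must be compared carefully to $\mu_D^2$. A convenient route is to first prove the analogue in the binomial model $G(n,p)$ with $p\equal m/\binom{n}{2}$, where edges are independent and the joint degree distribution factorises cleanly, and then transfer to $G(n,m)$ via standard contiguity between the two models when $m$ is sufficiently regular; alternatively one can estimate the hypergeometric joint distribution directly with Stirling-type approximations, treating separately the cases $uv\in E(G)$ and $uv\notin E(G)$.
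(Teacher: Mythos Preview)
Your proposal is essentially correct and follows the classical route (Bollob\'as's original argument): first and second moment applied directly to the number $X_d$ of vertices of degree at least $d$ in $G(n,m)$, with the threshold $D$ located via the tail ratio $\mu_{d+1}/\mu_d=\bigo{\mu/d}=\smallo{1}$. One minor sloppiness: the dichotomy ``$\mu_{D^\ast}\to\infty$'' versus ``$\mu_{D^\ast}$ bounded'' is not exhaustive for sequences, but this is harmless since one can define $D(n)$ pointwise (say $D=D^\ast$ when $\mu_{D^\ast}\geq \log\log n$ and $D=D^\ast-1$ otherwise) or pass to subsequences.

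The paper, however, takes a genuinely different route for the range it actually needs, namely $m=\bigo{n}$ (see \Cref{thm:G_n_m_bins_balls}). Rather than estimating hypergeometric tails in $G(n,m)$, it throws $2m$ balls into $n$ bins, pairs consecutive balls to form a random multigraph $M$, and observes that (i) the degree of $v$ in $M$ is exactly the load of bin $v$, (ii) conditioned on $M$ being simple, $M\sim G(n,m)$, and (iii) $\prob{M\text{ simple}}$ is bounded away from $0$ when $m=\bigo{n}$. The first and second moment method is then applied not to degrees in $G(n,m)$ but to bin loads in $\binsandballs{n}{2m}$ (\Cref{thm:concentration_balls_bins}), where the balls are genuinely independent and no hypergeometric correlations arise. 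This buys two things the paper needs: a clean explicit description of the two values as $\rounddown{\concentration{n}{2m}\pm\varepsilon}$, in a form that can be matched against the analogous quantities coming from random forests (\Cref{thm:forest_max_degree}) and random complex parts (\Cref{thm:random_complex_part}); and a proof that bypasses the joint-tail estimate you flag as the ``main obstacle''. The price is that the multigraph argument requires $\prob{M\text{ simple}}$ bounded away from $0$, which fails once $m/n\to\infty$; your approach (or Bollob\'as's) is needed for the full range $m=\smallo{n\log n}$, which the paper simply cites rather than re-proves.
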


We note that \Bollobas\ \cite{vertices_given_degree} actually considered the binomial random graph $G(n,p)$. But by using standard tools of relating $G(n,m)$ and $G(n,p)$ (see e.g. \cite[Section 1.1]{rg1}) one can translate his result as stated in \Cref{thm:max_degree_ergraph}.

In recent decades various models of random graphs have been introduced by imposing additional constraints to $G(n,m)$, e.g. degree restrictions or topological constraints. In particular, random planar graphs and related structures, like random graphs on surfaces and random planar maps, have attained considerable attention \cite{chap,dks,surface,mcd,chap2,gim,planar,msw,maxdegree_planar1,max_degree_vertex_model,planar_map1,planar_map2,planar_map3,planar1,planar2,planar3,planar4,planar5,planar6,planar7,planar8,planar9,planar10}. McDiarmid and Reed \cite{maxdegree_planar1} considered the so--called $n$-vertex model for random planar graphs, that is, a graph $\planargraph(n)$ chosen uniformly at random from the class of all vertex--labelled simple planar graphs on vertex set $[n]$. They proved that \whp\ $\maxdegree{\planargraph(n)}=\Th{\log n}$. Later Drmota, Giménez, Noy, Panagiotou, and Steger \cite{max_degree_vertex_model} used tools from analytic combinatorics and Boltzmann sampling techniques to show that \whp\ $\maxdegree{\planargraph(n)}$ is concentrated in an interval of length $\bigo{\log\log n}$. In contrast to these results, not much is known about the maximum degree in the random planar graph $\planargraph(n,m)$, which is a graph chosen uniformly at random from the class $\planarclass(n,m)$ of all vertex--labelled simple planar graphs on vertex set $[n]$ with $m=m(n)$ edges. In this paper we show, in the flavour of \Cref{thm:max_degree_ergraph}, that in the sparse regime, when $\limsup_{n \to \infty}m/n<1$, \whp\ $\maxdegree{\planargraph(n,m)}$ is concentrated at two values (see \Cref{thm:main_1sup,thm:main_int,thm:main_sub}). In particular, we have \whp\ $\maxdegree{\planargraph(n,m)}=\left(1+\smallo{1}\right)\log n/\log \log n$ if in addition $\liminf_{n \to \infty}m/n>0$ (see \Cref{cor:maxdegree}).

\subsection{Main results}\label{subsec:main}
In order to state our main results, we need the following definition, where we denote by $\log$ the natural logarithm.
\begin{definition}\label{def:nu}
	Let $\Concentration:\N^2\to \R^+$ be a function such that $\concentration{\nbins}{\nballs}$ is the unique positive zero of
	\begin{align*}
	f(x)=f_{\nbins, \nballs}(x)\defined x\log k+x-(x+1/2)\log x-(x-1)\log n.
	\end{align*}
	In case of $\nbins=\nballs$, we write $\specialconcentration{\nbins}:=\concentration{\nbins}{\nbins}$.
\end{definition}
In \Cref{sub:well_definedness} we will prove that the function $\Concentration$ is well--defined, i.e. $f$ has a unique positive zero. In \Cref{lem:nu} we will provide some important properties of $\Concentration$. In \Cref{sec:balls_bins} we motivate the definition of $\Concentration$ in the context of the balls--into--bins model.


We distinguish three different cases according to which \lq region\rq\ the edge density falls into. The first regime which we consider is when $m\leq n/2+\bigo{n^{2/3}}$.

\begin{thm}\label{thm:main_sub}
Let $\planargraph=\planargraph(n,m)\ur \planarclass(n,m)$, $m=m(n)\leq n/2+\bigo{n^{2/3}}$, and $\varepsilon>0$. Then we have \whp\ $\rounddown{\concentration{n}{2m}-\varepsilon}\lessorequal \maxdegree{\planargraph}\lessorequal \rounddown{\concentration{n}{2m}+\varepsilon}$. In particular, \whp\ $\maxdegree{P}\in \left\{\twoconcentration, \twoconcentration+1\right\}$, where $\twoconcentration=\twoconcentration(n):=\rounddown{\concentration{n}{2m}-1/3}$.
\end{thm}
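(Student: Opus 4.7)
The plan is to exploit that when $m \leq n/2 + \bigo{n^{2/3}}$, the random planar graph $\planargraph(n,m)$ is whp a forest decorated by only a small \emph{complex part} (the union of components containing at least two independent cycles), so that its maximum degree is governed by the degree statistics of a uniform random forest on $[n]$ with $\sim m$ edges.

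First, I would decompose $\planargraph$ into its forest part and its complex part $\complexpart{\planargraph}$. For this range of $m$, \ER--style critical-window bounds should yield that whp $\complexpart{\planargraph}$ has only $\bigo{n^{1/3}}$ vertices and a kernel of bounded total complexity. A separate counting argument is then needed to show that whp no vertex inside $\complexpart{\planargraph}$ attains a degree close to $\concentration{n}{2m}$ (which is of order $\log n / \log\log n$), so that whp the maximum degree of $\planargraph$ is achieved in the forest part.

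Second, I would analyse the forest part via a Pr\"ufer-type bijection. Conditional on the component structure, a uniform labelled forest on $[n]$ with the given tree sizes is encoded by a sequence whose entries are essentially uniform over $[n]$, and the degree of a vertex equals one plus the number of its occurrences in this sequence. This identifies the degree analysis with the balls--into--bins model $\binsandballs{n}{2m}$ introduced in \Cref{sec:balls_bins}; by the very definition of $\Concentration$ and standard first- and second-moment computations for the number of labels of frequency at least $x$, whp the maximum frequency lies in $[\rounddown{\concentration{n}{2m} - \varepsilon}, \rounddown{\concentration{n}{2m} + \varepsilon}]$ for every fixed $\varepsilon > 0$. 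Applying this with $\varepsilon = 1/3$, together with the continuity properties of $\Concentration$ from \Cref{lem:nu}, yields an interval of length $2/3 < 1$ around $\concentration{n}{2m}$, whose integer points are exactly $\twoconcentration$ and possibly $\twoconcentration+1$ with $\twoconcentration = \rounddown{\concentration{n}{2m} - 1/3}$.

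The main obstacle I foresee is bridging between \emph{uniform planar graphs} and \emph{uniform forests}: one has to verify that after conditioning on the complex part the residual forest is sufficiently close to being uniformly distributed, and that the unicyclic and complex components present in the critical window $m = n/2 + \bigo{n^{2/3}}$ neither host nor create vertices of degree exceeding $\concentration{n}{2m} + \bigo{1}$. Handling this carefully, together with the precise comparison of the Pr\"ufer-sequence distribution with the balls-into-bins model once components are conditioned on, is where the bulk of the combinatorial work will lie.
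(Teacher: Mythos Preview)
Your approach is substantially more involved than the paper's, and it misses the one--line observation that makes this case trivial. The paper simply notes that for $m \le n/2 + \bigo{n^{2/3}}$, Britikov's result (\Cref{thm:non_complex}) gives $\liminf_{n\to\infty}\prob{G(n,m)\text{ has no complex component}}>0$, and a graph without complex components is planar. Hence $\planargraph(n,m)$ is contiguous with respect to $G(n,m)$, and every \whp\ statement for $G(n,m)$ --- in particular the concentration of $\maxdegree{G(n,m)}$ from \Cref{thm:G_n_m_bins_balls}\ref{thm:G_n_m_bins_balls2} --- transfers to $\planargraph(n,m)$ verbatim. No decomposition of $\planargraph$ is needed; the ``main obstacle'' you anticipate (comparing uniform planar graphs with uniform forests) is bypassed entirely.

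The decomposition you propose is essentially what the paper carries out for the \emph{supercritical} regimes (\Cref{thm:main_1sup,thm:main_int}), where contiguity with $G(n,m)$ genuinely fails. Even there, however, your plan has two concrete gaps. First, what you call the ``forest part'' is really the non--complex part $\restcomplex{\planargraph}$, which contains unicyclic components as well as trees; a Pr\"ufer--type bijection for forests does not apply to it directly, and the paper instead handles $\restcomplex{\planargraph}$ by the same contiguity trick via $\nocomplex(n,m)$ and \Cref{lem:random_non_complex}. Second, your size estimate for $\complexpart{\planargraph}$ is off: in the critical window the relevant scale is $\Th{n^{2/3}}$, not $\bigo{n^{1/3}}$. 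So while your strategy could in principle be pushed through, it would end up reproducing the supercritical machinery in a regime where the much cheaper contiguity argument already suffices.
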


Next, we consider the case when $m=n/2+s$ for $s=s(n)>0$ such that $s=\smallo{n}$ and $s^3n^{-2}\to\infty$. Kang and \Luczak\ \cite{planar} showed that, in contrast to the case when $m\leq n/2+\bigo{n^{2/3}}$, in this regime \whp\ the largest component of $\planargraph=\planargraph(n,m)$ contains significantly more vertices than the second largest component. Therefore, we provide a concentration result on the maximum degree not only for $\planargraph$, but also for the largest component $\largestcomponent{\planargraph}$ of $\planargraph$ and the \lq rest\rq\ $\rest{\planargraph}:=\planargraph\setminus\largestcomponent{\planargraph}$.

\begin{thm}\label{thm:main_1sup}
Let $\planargraph=\planargraph(n,m)\ur \planarclass(n,m)$, $\Largestcomponent=\largestcomponent{\planargraph}$ be the largest component of $\planargraph$, and $\Rest=\planargraph\setminus\Largestcomponent$. Assume $m=m(n)=n/2+s$ for $s=s(n)>0$ such that $s=\smallo{n}$ and $s^3n^{-2}\to \infty$ and let $\varepsilon>0$. Then \whp
\begin{enumerate}
\item\label{thm:main_1sup1} $\rounddown{\specialconcentration{s}-\varepsilon}+1\lessorequal \maxdegree{\Largestcomponent}\lessorequal \rounddown{\specialconcentration{s}+\varepsilon}+1$;
\item\label{thm:main_1sup2} $\rounddown{\specialconcentration{n}-\varepsilon}\lessorequal \maxdegree{\Rest}\lessorequal \rounddown{\specialconcentration{n}+\varepsilon}$.
\end{enumerate}
In particular, \whp\ $\maxdegree{P}\in \left\{\twoconcentration, \twoconcentration+1\right\}$, where $\twoconcentration=\twoconcentration(n):=\max\left\{\rounddown{\specialconcentration{s}+2/3}, \rounddown{\specialconcentration{n}-1/3}\right\}$.
\end{thm}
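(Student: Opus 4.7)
My plan is to prove parts (a) and (b) separately, using the structural decomposition of $\planargraph(n,m)$ in the supercritical regime, and then combine them. By the results of Kang and \Luczak\ \cite{planar}, in this range of $m$ \whp\ the largest component $\Largestcomponent$ has $\Th{s}$ vertices and contains the entire complex part of $\planargraph$, while $\Rest$ is a tree-plus-unicyclic structure on $(1-\smallo{1})n$ vertices with $(1+\smallo{1})n/2$ edges. Since $\maxdegree{\planargraph}=\max\left\{\maxdegree{\Largestcomponent},\maxdegree{\Rest}\right\}$, the closing ``in particular'' statement will be a direct consequence of (a) and (b): each of the two maxima lies \whp\ in an integer interval of length at most one, centred at $\specialconcentration{s}+1$ and $\specialconcentration{n}$ respectively, so their maximum lies \whp\ in the two-element set $\left\{\twoconcentration,\twoconcentration+1\right\}$ for $\twoconcentration$ as defined in the statement.

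For part (b), I would use a contiguity argument to replace $\Rest$ by a uniformly random forest $\forest$ on its (random) vertex set with its edge count. For a uniformly random forest the degree sequence admits a Prüfer-type combinatorial encoding by a sequence of length $\sim n$ in which each position is nearly uniform on the vertex labels; the degree of a fixed vertex is then $1$ plus the number of times its label appears in this sequence. This places us in the balls-into-bins framework motivating \Cref{def:nu} with $\nballs\sim n$ balls and $\nbins\sim n$ bins. Applying the concentration result for the maximum load to be developed in \Cref{sec:balls_bins} then yields the bounds $\rounddown{\specialconcentration{n}-\varepsilon}\lessorequal\maxdegree{\Rest}\lessorequal\rounddown{\specialconcentration{n}+\varepsilon}$.

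For part (a), I would first isolate the kernel of $\Largestcomponent$, which in this regime has only $\bigo{s^3/n^2}=\smallo{s}$ vertices and edges. Conditional on this kernel and on the assignment of each non-kernel vertex of $\Largestcomponent$ to a kernel vertex, the tree part of $\Largestcomponent$ is (close to) a uniformly random forest rooted at the kernel vertices, covering $\sim s$ vertices with $\sim s$ edges. Running the same Prüfer-based reduction as in part (b), but now with $\nballs\sim s$ balls and $\nbins\sim s$ bins, shows that the maximum degree attained \emph{inside} these trees is concentrated at $\specialconcentration{s}$. The additional $+1$ in the statement arises because the maximum-degree vertex is \whp\ a tree-vertex incident to exactly one kernel-edge, so that its degree in $\Largestcomponent$ exceeds its degree in the forest by precisely one. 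A separate estimate is needed to rule out that the maximum is attained at a kernel vertex, but since the kernel contains only $\smallo{s}$ vertices of bounded typical degree, this contribution is much smaller than $\specialconcentration{s}$.

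The main obstacle I anticipate is the contiguity step: making precise enough the statement that, conditional on the kernel and on the vertex partition, both $\Rest$ and the tree-part of $\Largestcomponent$ are (close to) uniformly random forests, so that the Prüfer-to-balls-into-bins reduction applies cleanly at the level of the maximum degree. A secondary but delicate issue is transferring the concentration from $\specialconcentration{n}$ or $\specialconcentration{s}$ on the idealised forest to the true conditional forest, whose vertex and edge counts match $n$ or $s$ only up to an additive $\smallo{\cdot}$ error — here \Cref{lem:nu} (monotonicity and continuity of $\Concentration$) will be needed to guarantee that perturbing the arguments by $\smallo{n}$ or $\smallo{s}$ shifts $\Concentration$ by only $\smallo{1}$, which can then be absorbed into the $\varepsilon$ slack in the stated inequalities.
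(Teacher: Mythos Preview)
Your overall strategy---decompose $\planargraph$ into the giant and the rest, reduce each piece to a rooted random forest via a Pr\"ufer-type encoding, and invoke the balls-into-bins concentration behind \Cref{def:nu}---is the paper's route. Two points, however, are off and would not go through as written.

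First, your explanation of the $+1$ in part (a) is wrong. You write that the maximum-degree vertex is ``a tree-vertex incident to exactly one kernel-edge, so that its degree in $\Largestcomponent$ exceeds its degree in the forest by precisely one.'' In fact a non-core vertex carries no core- or kernel-edges at all: its degree in $\Largestcomponent$ \emph{equals} its degree in the attached forest. The $+1$ arises one level down, inside the Pr\"ufer encoding itself: a non-root vertex $v$ of the forest satisfies $\degree{v}{\forest}=\frequency{v}{\prueferseqence(\forest)}+1$, so the maximum forest degree is the maximum bin-load plus one. The paper carries this out by rooting the forest at the \emph{core} vertices (not the kernel), noting that $\maxdegree{C}\le 3$ in this regime, and using \Cref{lem:max_load_subset} to show that the maximum is \whp\ attained at a non-root, where $\degree{v}{\Complexlargestcore}=\degree{v}{\forest}$ exactly. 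Rooting at the kernel, as you propose, does not give a forest: the core paths between kernel vertices are still there, so the non-kernel part of $\Complexlargestcore$ is not acyclic.

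Second, your treatment of $\Rest$ in part (b) as a uniformly random forest is not correct, and it is also not true that $\Largestcomponent$ \whp\ contains the entire complex part. What holds is $\Largestcomponent=\Complexlargestcore$ \whp, so $\Rest=\Complexrestcore\cup\Restcomplex$: the small complex part $\Complexrestcore$ may be nonempty (of order $\bigo{n^{2/3}}$), and the non-complex part $\Restcomplex$ contains unicyclic components, hence is not a forest. The paper instead shows that, conditional on the core and on $\numberVertices{\Complexlargestcore},\numberVertices{\Complexrestcore}$, the three pieces $\Complexlargestcore,\Complexrestcore,\Restcomplex$ are independent, with $\Restcomplex$ distributed as $\nocomplex(u_n,w_n)$ for $w_n=u_n/2+\bigo{u_n^{2/3}}$; contiguity with $G(u_n,w_n)$ via \Cref{thm:non_complex} then reduces directly to balls-into-bins with $2w_n\sim n$ balls and $u_n\sim n$ bins, giving $\specialconcentration{n}$ without the extra $+1$. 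The small complex part is dispatched separately by the crude bound $\maxdegree{\Complexrestcore}\le\specialconcentration{n^{2/3}}+\bigo{1}=(2/3+\smallo{1})\log n/\log\log n$, which is asymptotically smaller than $\specialconcentration{n}$.
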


Finally, we obtain similar results as in \Cref{thm:main_1sup} but for the case $m=\alpha n/2$, where $\alpha$ tends to a constant in $\left(1,2\right)$.
\begin{thm}\label{thm:main_int}
	Let $\planargraph=\planargraph(n,m)\ur \planarclass(n,m)$, $\Largestcomponent=\largestcomponent{\planargraph}$ be the largest component of $\planargraph$, and $\Rest=\planargraph\setminus\Largestcomponent$. Assume $m=m(n)=\alpha n/2$, where $\alpha=\alpha(n)$ is tending to a constant in $\left(1,2\right)$ and let $\varepsilon>0$. Then \whp
	\begin{enumerate}
		\item $\rounddown{\specialconcentration{n}-\varepsilon}+1\lessorequal \maxdegree{\Largestcomponent}\lessorequal \rounddown{\specialconcentration{n}+\varepsilon}+1$;
		\item $\rounddown{\specialconcentration{n}-\varepsilon}\lessorequal \maxdegree{\Rest}\lessorequal \rounddown{\specialconcentration{n}+\varepsilon}$.
	\end{enumerate}
	In particular, \whp\ $\maxdegree{\planargraph}\in \left\{\twoconcentration, \twoconcentration+1\right\}$, where $\twoconcentration=\twoconcentration(n):=\rounddown{\specialconcentration{n}+2/3}$.
\end{thm}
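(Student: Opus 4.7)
The plan is to reduce the problem to the two-point concentration of the maximum load in a balls--into--bins model, which Definition~\ref{def:nu} and the function $\Concentration$ are tailored to capture via the Prüfer bijection. In the regime $\alpha\to\alpha_0\in(1,2)$, results of Kang and \Luczak\ on $\planargraph(n,m)$ guarantee that whp there is a unique giant component $\Largestcomponent$ carrying a complex core of size $\Theta(n)$, while $\Rest=\planargraph\setminus\Largestcomponent$ is whp a forest of small (of order $\log n$) components on $\Theta(n)$ vertices. I would therefore handle $\Rest$ and $\Largestcomponent$ separately and then combine.

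For $\Rest$, I would condition on the vertex set $\vertexSet{\Rest}$ and on the component sizes. A contiguity argument (of the flavour introduced in the planar--graphs literature) replaces $\Rest$ by a uniform random forest with the prescribed component sizes, which decomposes as a union of independent uniformly random labelled trees. The Prüfer bijection then identifies the degree sequence of such a forest with the multiplicity sequence of a near--uniform word of length $\numberVertices{\Rest}-c(\Rest) = n-\Theta(n)\cdot o(1)$ over the alphabet $[n]$; that is, a balls--into--bins instance with $\Theta(n)$ balls and $n$ bins. By the very definition of $\Concentration$ (motivated in Section~\ref{sec:balls_bins}), the maximum load is whp in $\{\rounddown{\specialconcentration{n}-\varepsilon},\ldots,\rounddown{\specialconcentration{n}+\varepsilon}\}$, yielding part~(b).

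For $\Largestcomponent$ I would use a core--kernel decomposition. In this regime the kernel has $\Theta(n)$ edges and bounded maximum degree whp, so the core $\core{\Largestcomponent}$ is obtained from the kernel by path--subdivision, and $\Largestcomponent$ itself is obtained from $\core{\Largestcomponent}$ by attaching a rooted tree at every core vertex. Conditioning on $\core{\Largestcomponent}$, a contiguity argument shows that these rooted trees are jointly distributed as a uniform forest on the non--core vertices of $\Largestcomponent$ with roots prescribed by $\vertexSet{\core{\Largestcomponent}}$; applying Prüfer once more gives a balls--into--bins instance with $\Theta(n)$ balls in $n$ bins, concentrated at $\specialconcentration{n}$. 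A non--core vertex has total degree equal to its tree--degree, whereas the root of such a tree gains one extra edge into the core, which explains the $+1$ shift and yields part~(a); matching upper bounds follow because any vertex has whp at most $O(1)$ incidences with kernel edges.

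The main obstacle is justifying the two contiguity steps, i.e. that conditional on the kernel (respectively on the component sizes of $\Rest$), the pendant trees are asymptotically uniform, so that the Prüfer correspondence genuinely converts the tree degrees into a balls--into--bins instance; this is the step where a delicate counting of rooted planar forests is required and where the $\alpha\in(1,2)$ assumption is essential to keep the kernel well--behaved. Once parts~(a) and~(b) are established, $\maxdegree{\planargraph}=\max\{\maxdegree{\Largestcomponent},\maxdegree{\Rest}\}$ and the $+1$ gap between the two concentrations makes the first term dominate whp, so $\maxdegree{\planargraph}\in\{\twoconcentration,\twoconcentration+1\}$ with $\twoconcentration=\rounddown{\specialconcentration{n}+2/3}$, as claimed.
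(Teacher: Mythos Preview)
Your overall plan---split into $\Largestcomponent$ and $\Rest$, reduce each to a balls--into--bins instance via a Pr\"ufer--type bijection---is the paper's plan, but two of your reductions are not correct as stated.

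For $\Rest$: in this regime $\Rest$ is \emph{not} whp a forest. Writing $\Rest=\Complexrestcore\cup\Restcomplex$ (small complex part and non--complex part), $\Restcomplex$ generically contains unicyclic components and $\Complexrestcore$ is nonempty, so you cannot ``replace $\Rest$ by a uniform random forest with the prescribed component sizes''. The paper instead conditions on $\big(\core{\planargraph},\numberVertices{\Complexlargestcore},\numberVertices{\Complexrestcore}\big)$; after this conditioning $\Restcomplex\sim\nocomplex(u_n,w_n)$ with $u_n=\Theta(n)$ and $w_n=u_n/2+O(u_n^{2/3})$, and Britikov's theorem gives $\contiguous{\nocomplex(u_n,w_n)}{G(u_n,w_n)}$, which in turn is contiguous with the multigraph built from $\binsandballs{u_n}{2w_n}$. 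This yields part~(b) \emph{without} a $+1$. The piece $\Complexrestcore$ has only $O(n^{2/3+o(1)})$ vertices and contributes maximum degree at most $\specialconcentration{n^{2/3}}+O(1)=(2/3+o(1))\log n/\log\log n$, hence is irrelevant.

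For $\Largestcomponent$: the core $\largestcomponent{\core{\planargraph}}$ has $\Theta(n^{2/3})$ vertices here, not $\Theta(n)$ (and the kernel is smaller still). This is precisely what makes the argument work: the root set must be $o(q^{1-\delta})$ with $q=\numberVertices{\Complexlargestcore}=\Theta(n)$ in order to invoke \Cref{lem:max_load_subset} and conclude that the maximum load is attained outside the roots. Relatedly, your explanation of the $+1$ is inverted: it does not come from a root gaining a core edge, but from the Pr\"ufer identity $\degree{v}{\forest}=\frequency{v}{\prueferseqence(\forest)}+1$ for \emph{non--root} $v$. Because the roots form an $o(q^{1-\delta})$ subset, whp $\maxdegree{\forest}$ is realised at a non--root vertex, so $\maxdegree{\Complexlargestcore}=\maxdegree{\forest}=\maxload+1$, giving part~(a). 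Finally, $\maxdegree{\core{\planargraph}}$ in this regime is only $O(h)$ for any $h=\omega(1)$, not $O(1)$; this is still $o(\specialconcentration{n})$, which is all that is needed, but the claim ``bounded maximum degree'' is not literally true.
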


Combining \Cref{thm:main_1sup,thm:main_int,thm:main_sub} we obtain the following statement on the asymptotic order of $\maxdegree{\planargraph}$.
\begin{coro}\label{cor:maxdegree}
	Let $\planargraph=\planargraph(n,m)\ur \planarclass(n,m)$ and assume $m=m(n)$ such that $\liminf_{n \to \infty} m/n>0$ and $\limsup_{n \to \infty} m/n<1$. Then \whp
	\begin{align*}
	\maxdegree{\planargraph}=\left(1+\smallo{1}\right)\log n/\log \log n.
	\end{align*}
\end{coro}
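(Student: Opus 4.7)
The plan is to combine the three concentration results \Cref{thm:main_sub}, \Cref{thm:main_1sup}, and \Cref{thm:main_int} with an asymptotic analysis of the auxiliary function $\Concentration$ from \Cref{def:nu}. The hypotheses $\liminf_{n\to\infty}m/n>0$ and $\limsup_{n\to\infty}m/n<1$ ensure $m=\Th{n}$. I would first argue, by passing to subsequences, that without loss of generality one of the three regimes covered by those theorems applies: either (i) $m\leq n/2+\bigo{n^{2/3}}$, or (ii) $m=n/2+s$ with $s=\smallo{n}$ and $s^3n^{-2}\to\infty$, or (iii) $m=\alpha n/2$ with $\alpha(n)$ tending to a constant in $(1,2)$. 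Since the conclusion is a whp asymptotic statement, verifying it along every such subsequence suffices.

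The core calculation is the asymptotics of $\concentration{n}{k}$ when $k=\Th{n}$. Rewriting $f(\concentration{n}{k})=0$ from \Cref{def:nu} gives
\begin{equation*}
(x+1/2)\log x = x\log(k/n)+x+\log n,
\end{equation*}
and under $k=\Th{n}$ the right-hand side equals $\log n+\bigo{x}$. Since the solution will turn out to satisfy $x=\smallo{\log n}$, this forces the dominant balance $x\log x=(1+\smallo{1})\log n$, whose unique solution is $x=(1+\smallo{1})\log n/\log\log n$ by a standard bootstrap (a crude upper bound on $x$ first, then inserting $\log x=(1+\smallo{1})\log\log n$). Regime (i) thus yields $\twoconcentration=\rounddown{\concentration{n}{2m}-1/3}=(1+\smallo{1})\log n/\log\log n$, and regime (iii) yields $\twoconcentration=\rounddown{\specialconcentration{n}+2/3}=(1+\smallo{1})\log n/\log\log n$; in both cases the constant-order corrections and floors are absorbed into the $(1+\smallo{1})$ factor.

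For regime (ii) the prescribed bound is $\twoconcentration=\max\bigl\{\rounddown{\specialconcentration{s}+2/3},\rounddown{\specialconcentration{n}-1/3}\bigr\}$. The second term is handled exactly as above. For the first, $\specialconcentration{s}$ satisfies $(x+1/2)\log x-x=\log s$, and the function $x\mapsto (x+1/2)\log x-x$ is strictly increasing on $(1,\infty)$ (its derivative equals $\log x+1/(2x)>0$), so $s\mapsto \specialconcentration{s}$ is strictly increasing and $\specialconcentration{s}\leq \specialconcentration{n}$ whenever $s\leq n$. Hence the maximum is $\specialconcentration{n}+\bigo{1}=(1+\smallo{1})\log n/\log\log n$ in this regime as well.

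The step I expect to demand the most care is the asymptotic solution of $(x+1/2)\log x=\log n+\bigo{x}$ with enough precision to yield the $(1+\smallo{1})$ factor uniformly over the allowed range of $k/n$; this is precisely where the structural properties of $\Concentration$ promised in \Cref{lem:nu} should be invoked. The subsequence argument and the monotonicity comparison between $\specialconcentration{s}$ and $\specialconcentration{n}$ are comparatively routine.
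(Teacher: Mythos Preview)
Your proposal is correct and follows essentially the same route as the paper's proof: split into the three regimes covered by \Cref{thm:main_sub,thm:main_1sup,thm:main_int}, then use the asymptotics $\concentration{n}{k}=(1+\smallo{1})\log n/\log\log n$ for $k=\Th{n}$ together with the monotonicity $\specialconcentration{s}\le\specialconcentration{n}$ to reduce every case to $\specialconcentration{n}+\bigo{1}$. The paper simply cites \Cref{lem:nu}\ref{lem:nu1} and \ref{lem:nu9} for these two facts rather than rederiving them, and it does not spell out the subsequence reduction you describe, but the logical skeleton is identical.
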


\subsection{Key techniques}
Our proofs are based on the so--called {\em core--kernel approach} (see e.g. \cite{planar,surface,cycles,evolution_sparse_graphs,cycles_luczak}), which is a decomposition and construction technique for sparse graphs. We construct $\planargraph$ stepwise and analyse each of the steps separately. We start by randomly choosing the so--called core $\core{\planargraph}$ of $\planargraph$, which is a small subgraph of $\planargraph$ and itself a random graph. Next, we use simple graph operations to obtain first the so--called complex part $\complexpart{\planargraph}$ of $\planargraph$ and then $\planargraph$ itself. Then we determine the maximum degree in the core $\core{\planargraph}$ and investigate how this information influences  the maximum degrees in the complex part $\complexpart{\planargraph}$ and in $\planargraph$, respectively. To that end, we relate the construction step from the core $\core{\planargraph}$ to the complex part $\complexpart{\planargraph}$ to the {\em balls--into--bins model} by using {\em random forests with specified roots} and a generalised version of {\em Prüfer sequences}. Similarly, we provide a connection between the step from the complex part $\complexpart{\planargraph}$ to $\planargraph$ and the balls--into--bins model by considering {\em random graphs without complex components}, that are random graphs having at most one cycle in each component, and the {\em \ER\ random graph}. Finally, we obtain concentration of the maximum degrees in $\complexpart{\planargraph}$ and $\planargraph$ by proving that the maximum load of a bin is strongly concentrated.

\subsection{Outline of the paper} 
The rest of the paper is structured as follows. After giving the necessary definitions, notations, and concepts in \Cref{sec:prelim}, we provide our proof strategy in \Cref{sec:strategy}. \Cref{sec:balls_bins} is devoted to the balls--into--bins model, which we use in \Cref{sec:ergraph,sec:forests} to show concentration of the maximum degree in the \ER\ random graph, in a random graph without complex components, and in a random forest with specified roots, respectively. In \Cref{sec:proof} we provide the proofs of our main results. Finally in \Cref{sec:discussion}, we discuss a possible generalisation of our results.

\section{Preliminaries}\label{sec:prelim}
\subsection{Notations for graphs}
We consider only undirected graphs or multigraphs and we always assume that the graphs are vertex--labelled.
\begin{definition}
	Given a (simple or multi) graph $H$ we denote by
	\begin{itemize}
		\item 
		$\vertexSet{H}$ the vertex set of $H$ and
		\item[]
		$\numberVertices{H}$ the order of $H$, i.e. the number of vertices in $H$;
		\item 
		$\edgeSet{H}$ the edge set of $H$ and
		\item[]
		$\numberEdges{H}$ the size of $H$, i.e. the number of edges in $H$;	
		\item 
		$\largestcomponent{H}$ the largest component of $H$;
		\item
		$\rest{H}:=H \setminus \largestcomponent{H}$ the graph obtained from $H$ by deleting the largest component;
		\item
		$\degree{v}{H}$ the degree of a vertex $v\in\vertexSet{H}$. If $\vertexSet{H}=[n]$, then we call $\left(\degree{1}{H}, \ldots, \degree{n}{H}\right)$ the degree sequence of $H$.
	\end{itemize}
\end{definition}
\begin{definition}\label{def:graph_class}
	Given a class  $\cl$ of graphs (e.g. the class of planar graphs), we denote by 
	$\cl(n)$ the subclass of $\cl$ containing the graphs on vertex set $[n]$ 
	and by $\cl(n,m)$ the subclass of $\cl$ containing the graphs on vertex set $[n]$ with $m$ edges, respectively. We write $\randomGraph(n)\ur \cl(n)$ for a graph chosen uniformly at random from $\cl(n)$
	and $\randomGraph(n,m)\ur \cl(n,m)$ for a graph chosen uniformly at random from $\cl(n,m)$, respectively. Throughout the paper, we tacitly assume that $|\cl(n)|$ and $|\cl(n,m)|$ are finite for all considered classes $\cl$ and all $n,m\in \N$.  
\end{definition}

\subsection{Complex part and core}\label{sub:decomposition}
We say that a component of a graph $H$ is {\em complex} if it has at least two cycles. The union of all complex components is called the {\em complex part} $\complexpart{H}$. We call the graph $H$ {\em complex} if all its components are complex. The union of all non--complex components is the {\em non--complex part} $\restcomplex{H}:=H\setminus \complexpart{H}$. The {\em core} $\core{H}$ is the maximal subgraph of $\complexpart{H}$ of minimum degree at least two. We denote by $\complexlargestcore{H}$ the component of $\complexpart{H}$ containing the largest component of the core $\largestcomponent{\core{H}}$. The rest of the complex part is denoted by $\complexrestcore{H}:=\complexpart{H}\setminus\complexlargestcore{H}$. We call $\complexlargestcore{H}$ and $\complexrestcore{H}$ the {\em large complex part} and the {\em small complex part}, respectively. We note that the number of vertices in $\complexlargestcore{H}$ is not necessarily larger than in $\complexrestcore{H}$, but it will be true in most cases we consider. Using this decomposition we can split $H$ into the three disjoint parts $\complexlargestcore{H}$, $\complexrestcore{H}$, and $\restcomplex{H}$, i.e.
\begin{align}\label{eq:23}
H=\complexlargestcore{H}~\dot{\cup}~\complexrestcore{H}~\dot{\cup}~\restcomplex{H}.
\end{align} 
Moreover, we have the relations $\core{\complexlargestcore{H}}=\largestcomponent{\core{H}}$ and $\core{\complexrestcore{H}}=\rest{\core{H}}$.

Later we will construct the large complex part, the small complex part, and the non--complex part of a random planar graph independently of each other. To that end, we will use the following two graph classes.
\begin{definition}\label{def:random_complex_part}
	Let $C$ be a core, i.e. a graph with minimum degree at least two, and $q\in \N$. Then we denote by $\complexclass(C,q)$ the class consisting of complex graphs having core $C$ and vertex set $[q]$. We let $\complexgraph(C,q)\ur\complexclass(C,q)$ be a graph chosen uniformly at random from this class.
\end{definition}

\begin{definition}\label{def:nocomplex}
We denote by $\nocomplexclass$ the class consisting of all graphs without complex components. For $n,m\in\N$ we let $\nocomplexclass(n,m)$ be the subclass of all graphs on vertex set $[n]$ with $m$ edges and we write $\nocomplex(n,m)\ur \nocomplexclass(n,m)$ for a graph chosen uniformly at random from $\nocomplexclass(n,m)$.
\end{definition}

\subsection{Random variables and asymptotic notation}
\begin{definition}
Let $S$ be a finite set and let $Y$ and $Z$ be random variables with values in $S$. Then we say that $Y$ is distributed like $Z$, denoted by $Y\sim Z$, if for all $x \in S$ we have $\prob{Y=x}=\prob{Z=x}$.
\end{definition}
Throughout this paper, we use the standard Landau notation and all asymptotics are taken with respect to $n$, i.e. when $n\to \infty$. In order to express that two random variables have asymptotically a \lq similar\rq\ distribution we use the notion of contiguity.
\begin{definition}
For each $n\in\N$, let $S=S(n)$ be a finite set and let $Y=Y(n)$ and $Z=Z(n)$ be random variables with values in $S$. We say that $Z$ is {\em contiguous} with respect to $Y$, denoted by $\contiguous{Z}{Y}$, if for all sequences $I=I(n)\subseteq S(n)$
\begin{align*}
\left(\lim\limits_{n\to \infty}\prob{Y\in I}=1\right) ~\implies~ \left(\lim\limits_{n\to \infty}\prob{Z\in I}=1\right).
\end{align*}
\end{definition}

\subsection{Conditional random graphs}\label{sub:conditional_random_graphs}
Given a class $\cl$ of graphs it sometimes seems quite difficult to directly analyse the random graph $\randomGraph=\randomGraph(n)\ur\cl(n)$. In such cases we will often use the idea of conditional random graphs. Loosely speaking, we split $\cl$ into disjoint subclasses and consider for each subclass $\tilde{\cl}$ the random graph $\tilde{\randomGraph}=\tilde{\randomGraph}(n)\ur \tilde{\cl}(n)$, in other words, the random graph $\randomGraph$ conditioned on the event that $\randomGraph\in \tilde{\cl}$. If we can show that some graph property holds in all these \lq conditional\rq\ random graphs \whp, then \whp\ this property holds also in $\randomGraph$. The following definition and lemma makes that idea more precise.
\begin{definition}\label{def:feasible}
	Given a class $\cl$ of graphs, a set $S$, and a function $\func:\cl\to S$, we call a sequence $\seq=(\term_n)_{n\in \N}$ {\em feasible} for $\left(\cl, \func\right)$ if for each $n \in \N$  there exists a graph $H \in \cl(n)$ such that $\func(H)=\term_n$. Moreover, for each $n\in \N$  we denote by $\left(\condGraph{\randomGraph}{\seq}\right)(n)$  a graph chosen uniformly at random from the set $\left\{H \in \cl(n): \func(H)=\term_n\right\}$.  We will often omit the dependence on $n$ and write just $\condGraph{\randomGraph}{\seq}$ (i.e. \lq $\randomGraph$ conditioned on $\seq$\rq)  instead of $\left(\condGraph{\randomGraph}{\seq}\right)(n)$.
\end{definition}
\begin{lem}[{\cite[Lemma 3.2]{cycles}}]\label{lem:conditional_random_graphs}
	Let $\cl$ be a class of graphs, $S$ a set,  $\func:\cl\to S$ a function, and $\property$ a graph property\footnote{Formally a graph property is a set of graphs.}. Let $\randomGraph=\randomGraph(n)\ur \cl(n)$. If for every sequence $\seq=(\term_n)_{n\in \N}$ that is feasible for $\left(\cl, \func\right)$  we have \whp\ $\condGraph{\randomGraph}{\seq} \in \property$, then we have \whp\ $\randomGraph \in \property$.
\end{lem}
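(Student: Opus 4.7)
The plan is to prove this by contraposition: if $\randomGraph(n) \in \property$ fails to hold whp, I will construct a single feasible sequence $\seq$ along which $\condGraph{\randomGraph}{\seq} \in \property$ also fails to hold whp, contradicting the hypothesis.

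To that end, I would start by assuming that $\prob{\randomGraph(n) \in \property}$ does not tend to $1$, so that there exist $\varepsilon > 0$ and an infinite subsequence $(n_k)_{k\in\N}$ with $\prob{\randomGraph(n_k) \notin \property} \geq \varepsilon$. Setting $S(n) := \{\func(H): H \in \cl(n)\} \subseteq S$, which is a nonempty finite set by the tacit finiteness assumption on $\cl(n)$, the law of total probability applied to the partition of $\cl(n)$ into the fibres of $\func$ gives
\[ \prob{\randomGraph(n) \notin \property} = \sum_{s \in S(n)} \prob{\func(\randomGraph(n)) = s} \cdot \prob{\randomGraph(n) \notin \property \mid \func(\randomGraph(n)) = s}. \]
Since the weights $\prob{\func(\randomGraph(n)) = s}$ are nonnegative and sum to $1$, the maximum over $s \in S(n)$ of the conditional failure probabilities is at least the unconditional one. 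For each $n$ I would therefore pick $\term_n \in S(n)$ achieving this maximum (breaking ties arbitrarily).

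Next, I would verify that the sequence $\seq := (\term_n)_{n \in \N}$ is feasible for $(\cl, \func)$ in the sense of \Cref{def:feasible}: this is immediate, since $\term_n \in S(n) = \func(\cl(n))$ means that some $H \in \cl(n)$ realises $\func(H) = \term_n$. By the very definition of $\condGraph{\randomGraph}{\seq}(n)$, this random graph has the same distribution as $\randomGraph(n)$ conditioned on the event $\{\func(\randomGraph(n)) = \term_n\}$, so along the bad subsequence
\[ \prob{\condGraph{\randomGraph}{\seq}(n_k) \notin \property} \geq \prob{\randomGraph(n_k) \notin \property} \geq \varepsilon. \]
Hence $\condGraph{\randomGraph}{\seq} \in \property$ does not hold whp, contradicting the hypothesis.

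I do not expect any substantive obstacle: the argument is a one-line elementary maximisation combined with the law of total probability. The only point that deserves a moment of care is that $\term_n$ must be chosen for \emph{every} $n \in \N$ (not just along the bad subsequence $n_k$) in order for $\seq$ to qualify as a feasible sequence; for the remaining $n$ one may pick $\term_n \in S(n)$ arbitrarily, which does not affect the contradiction derived along $(n_k)$.
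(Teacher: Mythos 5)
Your proof is correct: the contrapositive argument via the law of total probability over the fibres of $\func$, picking $\term_n$ to maximise the conditional failure probability (which exists since $\cl(n)$ is finite) and completing the sequence arbitrarily off the bad subsequence so that $\seq$ is feasible, is exactly the standard argument for this lemma. The paper itself does not prove the statement but cites it from elsewhere, and your argument matches the one given there, with the one point needing care (defining $\term_n$ for all $n$, not only along $(n_k)$) handled correctly.
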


\subsection{Internal structure of a random planar graph}\label{sub:internal_structure}
In the proofs of our main results we will use some results from \cite{surface} on the internal structure of a random planar graph $\planargraph(n,m)$, e.g. maximum degree of the core or the order of the core and the complex part, which are reformulated to simplify asymptotic notation. 
\begin{thm}[{\cite[Theorems 5.1 and 5.4]{surface}}]\label{thm:internal_structure}
	Let $\planargraph=\planargraph(n,m)\ur \planarclass(n,m)$, $C=\core{\planargraph}$ be the core, $\Complexlargestcore=\complexlargestcore{\planargraph}$ the large complex part, $\Complexrestcore=\complexrestcore{\planargraph}$ the small complex part, $\Restcomplex=\restcomplex{\planargraph}$ the non--complex part, and $\Largestcomponent=\largestcomponent{\planargraph}$ the largest component of $\planargraph$. In addition, let $h=h(n)=\smallomega{1}$ be a function tending to $\infty$ arbitrarily slowly. We assume that either $m=n/2+s$ for $s=s(n)>0$ such that $s=\smallo{n}$ and $s^3n^{-2}\to \infty$ or $m=\alpha n/2$, where $\alpha=\alpha(n)$ tends to a constant in $(1,2)$. Then \whp\ $\maxdegree{C}$, $\numberVertices{\largestcomponent{C}}$, $\numberVertices{\Complexlargestcore}$, and $\numberVertices{\Complexrestcore}$ lie in the following ranges.	
\begin{center}
	\def\arraystretch{1.25}
\begin{tabular}{l|c c c c}
& $\maxdegree{C}$ & $\numberVertices{\largestcomponent{C}}$ & $\numberVertices{\Complexlargestcore}$ & $\numberVertices{\Complexrestcore}$
\\
\hline
$m=n/2+s$ for $s>0$, $s=\smallo{n}$, and $s^3n^{-2}\to \infty$ & $3$ &  $\Th{sn^{-1/3}}$ & $\left(2+\smallo{1}\right)s$ & $\bigo{hn^{2/3}}$\\
$m=\alpha n/2$ for $\alpha \to c\in (1,2)$ & $\bigo{h}$ & $\Th{n^{2/3}}$ & $\left(\alpha-1+\smallo{1}\right)n$ & $\bigo{hn^{2/3}}$
\end{tabular}
\end{center}
Moreover, \whp\ 
$\numberEdges{\Restcomplex}=\numberVertices{\Restcomplex}/2+\bigo{h\numberVertices{\Restcomplex}^{2/3}}$ and $\Complexlargestcore=\Largestcomponent$.	
\end{thm}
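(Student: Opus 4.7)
The plan is to peel $\planargraph(n,m)$ apart along the hierarchy $\planargraph \supseteq \complexpart{\planargraph} \supseteq \core{\planargraph} \supseteq \kernel{\planargraph}$, where the kernel $\kernel{\planargraph}$ is the minimum-degree-$3$ multigraph obtained from the core by suppressing every degree-$2$ vertex. The core is recovered from the kernel by subdividing each kernel edge with a uniformly random number of degree-$2$ vertices, and the complex part is recovered from the core by attaching a uniformly random rooted forest at every core vertex. All structural claims then reduce to controlling (i) the kernel, (ii) the subdivision step, and (iii) the forest-attachment step.

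The first and most technical step is to control the kernel. Appealing to the analytic-combinatorics enumeration of planar graphs with prescribed excess, one shows that whp in the early supercritical regime $m=n/2+s$ the kernel has $\Th{s^3n^{-2}}$ vertices and is a cubic multigraph, while in the intermediate regime $m=\alpha n/2$ the kernel has $\Th{n}$ vertices with only $\bigo{n^{2/3}}$ sitting outside its largest component. Since every vertex of $\core{\planargraph}$ of degree at least three must be a kernel vertex, this immediately yields $\maxdegree{\core{\planargraph}}=3$ in the first regime, and via a standard configuration-model maximum-degree estimate applied to the kernel, $\maxdegree{\core{\planargraph}}=\bigo{h}$ in the second regime.

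The second step delivers the vertex counts. Combining the kernel sizes with the allocation of $\numberVertices{\core{\planargraph}}-\numberVertices{\kernel{\planargraph}}$ subdivision vertices across the kernel edges (a uniform composition, hence a balls-into-bins calculation) and the allocation of the remaining mass into the pendant forests (pinned by the target edge count $m$), one obtains $\numberVertices{\largestcomponent{\core{\planargraph}}}=\Th{sn^{-1/3}}$ resp.\ $\Th{n^{2/3}}$, $\numberVerticesLarge{\Complexlargestcore}=(2+\smallo{1})s$ resp.\ $(\alpha-1+\smallo{1})n$, and $\numberVerticesLarge{\Complexrestcore}=\bigo{hn^{2/3}}$.

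Finally, the non-complex part $U=\restcomplex{\planargraph}$ consists entirely of tree and unicyclic components, so $\numberEdges{U}=\numberVertices{U}-\#\{\text{tree components of }U\}$. A standard second-moment argument for a random graph without complex components on $\numberVertices{U}=\Th{n}$ vertices at near-critical density shows that the number of tree components concentrates at $\numberVertices{U}/2+\bigo{h\numberVertices{U}^{2/3}}$, which gives the stated estimate on $\numberEdges{U}$. The equality $\Complexlargestcore=\Largestcomponent$ then follows because $\numberVerticesLarge{\Complexlargestcore}$ is of strictly larger order than both $\numberVerticesLarge{\Complexrestcore}$ and the largest component of $U$. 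The main obstacle is step (i): obtaining the precise asymptotic orders for the kernel requires either a Gim\'enez--Noy saddle-point analysis or Boltzmann sampling of planar graphs, and this is where planarity is actually used; the subsequent arguments are essentially model-agnostic.
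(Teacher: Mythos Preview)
This theorem is not proved in the present paper at all: it is quoted from \cite{surface} (Theorems~5.1 and~5.4 there), reformulated for convenience, and then used as a black box in the proofs of \Cref{thm:main_1sup,thm:main_int}. There is therefore no ``paper's own proof'' to compare your sketch against.

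That said, your outline does follow the core--kernel decomposition that \cite{surface} actually employs, so as a high-level roadmap it is broadly faithful to how the cited result is obtained. One concrete point is off, however: in the intermediate regime $m=\alpha n/2$ you assert that the kernel has $\Th{n}$ vertices, yet the very table you are trying to establish gives $\numberVertices{\largestcomponent{C}}=\Th{n^{2/3}}$ for the largest component of the \emph{core}, and every kernel vertex is a core vertex of degree at least three, so the kernel cannot have order $\Th{n}$. What is linear in $n$ in that regime is the excess of the complex part, not the kernel size. This affects your step~(i), since the bound $\maxdegree{C}=\bigo{h}$ is argued on a kernel that is polynomially smaller than $n$, not of linear size.
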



\subsection{Properties of $\concentration{\nbins}{\nballs}$}
We will use the following basic properties of $\concentration{\nbins}{\nballs}$ defined in \Cref{def:nu}. For completeness, we provide a proof of the following statement in \Cref{sub:proof_nu}.
\begin{lem}\label{lem:nu}
Let the function $\concentration{\nbins}{\nballs}$ be defined as in \Cref{def:nu} and $\specialconcentration{\nbins}=\concentration{n}{n}$. Then we have
\begin{enumerate}
\item \label{lem:nu5} $\concentration{\nbins}{\nballs}>1$ for all $\nbins, \nballs \in \N$;
\item \label{lem:nu7} if $\nballs=\nballs(\nbins)=\bigo{n^{1/3}}$, then $\concentration{\nbins}{\nballs}\leq 5/3+\smallo{1}$;
\item \label{lem:nu1} if $\nballs=\nballs(\nbins)=\Th{\nbins}$, then $\concentration{\nbins}{\nballs}=\left(1+\smallo{1}\right)\log \nbins/\log\log\nbins$;
\item \label{lem:nu8} if $\nballs=\nballs(\nbins)=\bigo{\nbins}$, then $\concentration{\nbins}{\nballs}=\smallo{\log n}$;
\item \label{lem:nu6}
if $\nballs=\nballs(\nbins)=\bigo{\nbins}$, then $\concentration{\nbins}{\nballs}=\smallomega{\nballs/\nbins}$;
\item \label{lem:nu4} $\concentration{\nbins}{\nballs}$ is strictly increasing in the argument $\nballs$;
\item \label{lem:nu3} if $\nballs=\nballs(\nbins)=\Th{\nbins}$ and $d=d(n)=\smallo{\nbins\left(\log \log \nbins\right)^2/\log \nbins}$, then $\concentration{\nbins}{\nballs+d}-\concentration{\nbins}{\nballs}=\smallo{1}$;
\item \label{lem:nu9} $\specialconcentration{\nbins}$ is strictly increasing;
\item \label{lem:nu2} if $c=c(\nbins)=\Th{1}$, then $\specialconcentration{c\nbins}=\specialconcentration{\nbins}+\smallo{1}$.
\end{enumerate}
\end{lem}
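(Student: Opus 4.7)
The strategy throughout is to analyze the function $f(x)=f_{n,k}(x)$ (or its specialization $g(x):=f_{n,n}(x)$) directly. After recalling from the well-definedness subsection that $f$ has a unique positive zero $\nu(n,k)$, and noting that $f(x)\to+\infty$ as $x\to 0^+$ while $f(x)\to-\infty$ as $x\to\infty$, we see $f>0$ on $(0,\nu)$ and $f<0$ on $(\nu,\infty)$. Hence to prove $\nu(n,k)>c$ it suffices to check $f(c)>0$, and to prove $\nu(n,k)<c$ it suffices to check $f(c)<0$. Part (a) is immediate: $f(1)=\log k+1>0$, so the unique zero lies to the right of $1$.

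For parts (b), (c), (d) I would substitute the candidate expression into $f$ and track the dominant asymptotic term. For (b), putting $x=5/3+\epsilon$ with $k=\bigo{n^{1/3}}$ yields
\[
f(x)=(5/3+\epsilon)\log k-(2/3+\epsilon)\log n+\bigo{1}\leq\bigl(-1/9-2\epsilon/3\bigr)\log n+\bigo{1}<0
\]
for large $n$. For (c) and (d), writing $x=\alpha\log n/\log\log n$ (resp.\ $x=\epsilon\log n$) and using $\log x=(1+\smallo{1})\log\log n$ makes the leading term $-x\log x+\log n$ (resp.\ $-\epsilon\log n\log\log n$) change sign at the claimed value. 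Part (e) follows from (a) and (c): if $k/n\to 0$ then $\nu>1\gg k/n$, while if $k=\Theta(n)$ then (c) gives $\nu\to\infty$ whereas $k/n=\Theta(1)$.

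Part (f) is a direct comparison: $f_{n,k+1}(x)-f_{n,k}(x)=x\log\bigl(1+1/k\bigr)>0$ for any $x>0$, so evaluating at $x=\nu(n,k)$ gives $f_{n,k+1}(\nu(n,k))>0$, forcing $\nu(n,k+1)>\nu(n,k)$. Part (g) is the main technical step and where I expect most of the work: I would apply the implicit-function/mean-value idea, computing
\[
-\frac{\partial f}{\partial x}\bigg|_{x=\nu}=\log n+\log\nu-\log k+\frac{1}{2\nu}
\]
and noting that when $k=\Theta(n)$ part (c) gives $\log\nu=(1+\smallo{1})\log\log n$ and $\log k=\log n+\bigo{1}$, so $-\partial_x f|_\nu=(1+\smallo{1})\log\log n$. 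Since $\partial_k f=x/k$, a discrete increment of $d$ in $k$ shifts the zero by at most
\[
\nu(n,k+d)-\nu(n,k)=\bigo{d\cdot\frac{\nu/k}{\log\log n}}=\bigo{\frac{d\log n}{n(\log\log n)^2}}=\smallo{1}
\]
under the stated hypothesis on $d$; the sign of $\partial_x f$ at $\nu$ (which is determined by transversality of the unique crossing) is what keeps this bound one-sided-safe.

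Finally, for (h) and (i) I would exploit the clean simplification $g(x):=f_{n,n}(x)=x-(x+1/2)\log x+\log n$, so that $\specialconcentration{n}$ is characterized by $h(\specialconcentration{n})=-\log n$, where $h(x):=x-(x+1/2)\log x$ satisfies $h'(x)=-\log x-1/(2x)<0$ on $(1,\infty)$. Since $\specialconcentration{n}>1$ by (a) and $-\log n$ is strictly decreasing, the strict monotonicity in (h) is immediate. For (i), applying the mean-value theorem to $h$ between $\specialconcentration{n}$ and $\specialconcentration{cn}$ yields
\[
\specialconcentration{cn}-\specialconcentration{n}=\frac{-\log c}{h'(\xi)}=\frac{\log c}{(1+\smallo{1})\log\log n}=\smallo{1},
\]
where $\log\xi=(1+\smallo{1})\log\log n$ because both endpoints are asymptotic to $\log n/\log\log n$ by (c). The only delicate point here is that (i) should be deduced after (c), so ordering the arguments matters; otherwise, every clause reduces to elementary asymptotics of $f$.
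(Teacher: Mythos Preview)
Your proof is correct and follows the same overall strategy as the paper: exploit that $f$ changes sign exactly once, so lower/upper bounds on $\nu$ reduce to checking the sign of $f$ at a test point. Parts (a)--(d) match the paper's argument almost verbatim.

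There are some minor differences in execution for the remaining parts. For (f), (g), (h), (i) the paper introduces auxiliary strictly monotone functions $K(x):=(1+1/(2x))\log x+(1-1/x)\log n-1$ and $g(x):=(x+1/2)\log x-x$, characterized by $K(\nu(n,k))=\log k$ and $g(\nu(n))=\log n$, and then reads off monotonicity and the $o(1)$ shifts by comparing values of $K$ or $g$. Your approach via the partial derivatives of $f$ and the mean-value/implicit-function theorem is equivalent---indeed $-f'(x)/x$ is essentially $K'(x)$---and gives the same bounds; the paper's packaging is slightly cleaner in that it avoids having to argue that $f'$ is controlled uniformly on the relevant interval, whereas your version needs that transversality remark you flagged. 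For (e), the paper instead rewrites $f(\nu)=0$ as $1=e\,\tfrac{k}{n\nu}\exp\bigl((\log n-\tfrac{1}{2}\log\nu)/\nu\bigr)$ and uses (d) to see the exponential factor tends to infinity, forcing $n\nu/k\to\infty$ directly; your case split ($k/n\to 0$ versus $k=\Theta(n)$) also works but, strictly speaking, needs a one-line subsequence argument to cover oscillating $k/n$ under the hypothesis $k=O(n)$.
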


\section{Proof strategy}\label{sec:strategy}
The general proof idea is to relate the random planar graph $\planargraph=\planargraph(n,m)$ to other models of random graphs, e.g. \ER\ random graph or random forests with specified roots. Then we link these random graphs to the balls--into--bins model (see \Cref{sec:balls_bins} for a formal definition of the balls--into--bins model). In particular, we aim to find relations between the maximum degrees of these random graphs and the maximum load of a bin. Then we will show that \whp\ the maximum load of a bin is concentrated at two values (see \Cref{thm:concentration_balls_bins}). Using that we will deduce that the maximum degrees of the above mentioned random graphs, in particular those of the random planar graph $\planargraph$, are strongly concentrated. In the following we make that idea more precise.

In order to proof \Cref{thm:main_sub}, we use the known fact that with positive probability the \ER\ random graph $G(n,m)$ is planar if $m\leq n/2+\bigo{n^{2/3}}$ (see \Cref{thm:non_complex}). Thus, it suffices to determine $\maxdegree{G(n,m)}$ instead of $\maxdegree{P(n,m)}$, which is done through the following relation between $G(n,m)$ and the balls--into--bins model.

\subsection{\ER\ random graph and the balls--into--bins model}\label{sub:strategy_without_complex}
We note that due to \Cref{thm:max_degree_ergraph} we already know that \whp\ $\maxdegree{G(n,m)}$ is concentrated at two values. \Bollobas\ \cite{vertices_given_degree} provided an implicit formula for these two values. However, it seems difficult to relate these values to concentration results of other random graphs considered in this paper (e.g. \Cref{thm:random_complex_part}\ref{thm:random_complex_part2}). Therefore, we give an alternative proof of \Cref{thm:max_degree_ergraph} by providing a relation to the balls--into--bins model. Given $n$ bins $\bin_1, \ldots, \bin_\nbins$ and $2m$ balls $\ball_1, \ldots, \ball_{2m}$ we denote by $\locationBit_i$ the index of the bin to which the $i$--th ball $\ball_i$ is assigned for each $i\in[2m]$. Then we consider the random multigraph $\multigraph$ with $\vertexSet{\multigraph}=[n]$ and $\edgeSet{\multigraph}=\setbuilder{\left\{\locationBit_{2i-1}, \locationBit_{2i}\right\}}{i \in [m]}$. We will show that with positive probability $\multigraph$ is simple and that conditioned on $\multigraph$ being simple, $\multigraph$ is distributed like $G(n,m)$. Then the concentration of $\maxdegree{G(n,m)}$ follows by the concentration of the maximum load of a bin (see \Cref{thm:G_n_m_bins_balls}).

\subsection{Decomposition and conditional random graphs}\label{sub:strategy_decomposition}
In order to proof \Cref{thm:main_1sup,thm:main_int} we will apply the decomposition technique described in \Cref{sub:decomposition}. We recall that we can split the random planar graph $\planargraph=\planargraph(n,m)$ into the large complex part $\Complexlargestcore=\complexlargestcore{\planargraph}$, the small complex part $\Complexrestcore=\complexrestcore{\planargraph}$ and the non--complex part $\Restcomplex=\restcomplex{\planargraph}$ (see (\ref{eq:23})). We note that \whp\ $\Complexlargestcore$ coincide with the largest component $\largestcomponent{\planargraph}$ (see \Cref{thm:internal_structure}). Thus, it suffices to determine $\maxdegree{\Complexlargestcore}$, $\maxdegree{\Complexrestcore}$, and $\maxdegree{\Restcomplex}$ and use that \whp\ $\maxdegree{\largestcomponent{\planargraph}}=\maxdegree{\Complexlargestcore}$ and  $\maxdegree{\rest{\planargraph}}=\max\left\{\maxdegree{\Complexrestcore}, \maxdegree{\Restcomplex}\right\}$. Instead of doing that directly in $\planargraph$, we will consider another random graph $\tilde{\planargraph}$ and then provide a relation between $\planargraph$ and $\tilde{\planargraph}$.

In order to construct $\tilde{\planargraph}$, we assume that $\tilde{l}, \tilde{r}\in \N$ and a core $\tilde{C}$ with largest component $\largestcomponent{\tilde{C}}$ and rest $\rest{\tilde{C}}$ are given. We choose the large complex part $\complexlargestcore{\tilde{\planargraph}}$, the small complex part $\complexrestcore{\tilde{\planargraph}}$, and the non--complex part $\restcomplex{\tilde{\planargraph}}$, independent of each other. In addition, we want that the core $\core{\tilde{\planargraph}}$ is equal to $\tilde{C}$, $\numberVertices{\complexlargestcore{\tilde{\planargraph}}}=\tilde{l}$, and $\numberVertices{\complexrestcore{\tilde{\planargraph}}}=\tilde{r}$. This motivates the following choices: 
\begin{align}
\complexlargestcore{\tilde{\planargraph}}&=\complexgraph\left(\largestcomponent{\tilde{C}},\tilde{l}\right),\label{eq:25}\\
\complexrestcore{\tilde{\planargraph}}&=\complexgraph\left(\rest{\tilde{C}},\tilde{r}\right),\label{eq:26}\\
\restcomplex{\tilde{\planargraph}}&=\nocomplex(\tilde{u},\tilde{w}),\label{eq:27}
\end{align}
where the random graphs on the right hand side are as defined in \Cref{def:random_complex_part,def:nocomplex}. Furthermore, we define $\tilde{u}:=n-\tilde{l}-\tilde{r}$ and $\tilde{w}:=m-\numberEdges{\tilde{C}}+\numberVertices{\tilde{C}}-\tilde{l}-\tilde{r}$, so that $\tilde{\planargraph}$ has $n$ vertices and $m$ edges.

To relate $\tilde{\planargraph}$ to the original random planar graph $\planargraph$ we use so--called conditional random graphs (see \Cref{sub:conditional_random_graphs}). Roughly speaking, the idea of this method is that if for all \lq typical\rq\ choices of $\tilde{C}$, $\tilde{l}$, and $\tilde{r}$ \whp\ a graph property holds in $\tilde{\planargraph}$, then \whp\ this property holds in $\planargraph$. In order to determine what \lq typical\rq\ choices of $\tilde{C}$, $\tilde{l}$, and $\tilde{r}$ are, we use known results on the internal structure of $\planargraph$ (see \Cref{thm:internal_structure}). For example, if we know that \whp\ the core $\core{\planargraph}$ satisfies a certain structure, e.g. the maximum degree is three or the number of vertices lies in a certain interval, then typical choices of $\tilde{C}$ are those cores having this structure.

Using this relation between $\planargraph$ and $\tilde{\planargraph}$ it suffices to show that the maximum degrees in $\complexlargestcore{\tilde{\planargraph}}$, $\complexrestcore{\tilde{\planargraph}}$, and $\restcomplex{\tilde{\planargraph}}$ are strongly concentrated. By the definition of these random graphs (see (\ref{eq:25})--(\ref{eq:27})) it remains to determine the maximum degrees in $\complexgraph\left(C,q\right)$ and $\nocomplex(n,m)$ for fixed values of $C$, $q$, $n$, and $m$. We will see that if we consider $\nocomplex(n,m)$, then we always have $m=n/2+\bigo{n^{2/3}}$. It is well--known that in this regime the \ER\ random graph $G(n,m)\ur \mathcal G(n,m)$ has with positive probability no complex component (see \Cref{thm:non_complex}). Hence, we can deduce $\maxdegree{\nocomplex(n,m)}$ from results on $\maxdegree{G(n,m)}$ (see \Cref{sub:strategy_without_complex}). In order to find $\maxdegree{\complexgraph\left(C,q\right)}$ we will connect $\complexgraph\left(C,q\right)$ to the balls--into--bins model and use a concentration result of the maximum load of a bin. We will sketch that idea in \Cref{sub:strategy_complex_part}.

\subsection{Random complex part and forests with specified roots}\label{sub:strategy_complex_part}
Let $C$ be a core (on vertex set $[\numberVertices{C}]$) and $q\in \N$. In \Cref{def:random_complex_part} we denoted by $\complexgraph\left(C,q\right)$ a graph chosen uniformly at random from the family of all complex graphs with core $C$ and vertex set $[q]$. Moreover, we let $\forestclass(n, \ntrees)$ be the class of forests on vertex set $[n]$ consisting of $\ntrees$ trees such that each vertex from $[\ntrees]$ lies in a different tree. The elements in $\forestclass(n, \ntrees)$ are called {\em forests with specified roots} and the vertices in $[\ntrees]$ {\em roots}. For simplicity we will often just write forests instead of forest with specified roots. We can construct $\complexgraph=\complexgraph\left(C,q\right)$ by choosing a random forest $\forest=\forest(q, \numberVertices{C})\ur\forestclass(q, \numberVertices{C})$ and replacing each vertex $v$ in $C$ by the tree with root $v$. For the degrees of vertices in $\complexgraph$ we obtain $\degree{v}{\complexgraph}=\degree{v}{C}+\degree{v}{\forest}$ for $v\in C$ and $\degree{v}{\complexgraph}=\degree{v}{\forest}$ otherwise. In our applications we will have that $\maxdegree{C}$ is bounded and $\numberVertices{C}$ is \lq small\rq\ compared to $q$ (see \Cref{thm:internal_structure}). This will imply that \whp\ $\maxdegree{\complexgraph}=\maxdegree{\forest}$ (see \Cref{thm:random_complex_part}). 

In order to determine $\maxdegree{\forest}$ we will introduce a bijection between $\forestclass(n, \ntrees)$ and $\sequences{n}{\ntrees}:=[n]^{n-\ntrees-1}\times [\ntrees]$ similar to Prüfer sequences for trees (see \Cref{sub:pruefer}). Given a forest $\forest\in \forestclass(n,\ntrees)$ we recursively delete the leaf\footnote{A leaf in a forest is a vertex of degree one.} with largest label and thereby build a sequence by noting the unique neighbours of the leaves. We will show in \Cref{thm:pruefer} that this is indeed a bijection and that the degree of a vertex $v$ is determined by the number of occurrences of $v$ in the sequence (see (\ref{eq:12})). It is straightforward to construct a random element from $\sequences{n}{\ntrees}$ by a balls--into--bins model such that the load of a bin equals the number of occurrences in the sequence of the corresponding element. Thus, the concentration result on the maximum load translates to a concentration of the maximum degree $\maxdegree{\forest}$.

\section{Balls into bins}\label{sec:balls_bins}
Balls--into--bins models have been extensively studied in the literature (see e.g. \cite{johnson_kotz, mitzenmacher_upfal}). Throughout the paper, we will use the following model. Given $\nbins$ bins $\bin_1, \ldots, \bin_\nbins$ we sequentially assign $\nballs$ balls $\ball_1, \ldots, \ball_\nballs$ to those $\nbins$ bins by choosing a bin for each ball, independently and uniformly at random. Let $\location=\left(\locationBit_1, \ldots, \locationBit_\nballs\right)$ be the location vector, i.e. $\locationBit_i$ is the index of the bin to which the $i-$th ball $\ball_i$ is assigned. For each $j\in [\nbins]$ we call the number of balls in the $j$--th bin $\bin_j$ the {\em load} $\load_j=\load_j(\location)$. We write $\loadvector=\loadvector(\location)=\left(\load_1, \ldots, \load_\nbins\right)$ for the vector of all loads and denote by $\maxload=\maxload(\location)=\max_{j \in [\nbins]}\load_j$ the maximum load in a bin. For $\ntrees\in[\nbins]$ we let $\maxload_{ \ntrees}=\maxload_{\ntrees}(\location)=\max_{j \in [\ntrees]}\load_j$ be the maximum load in one of the first $\ntrees$ bins $\bin_1, \ldots, \bin_\ntrees$. We write $\binsandballs{\nbins}{\nballs}$ for a random vector distributed like the location vector $\location$ and $\maxbinsandballs{\nbins}{\nballs}$ for a random variable distributed like the maximum load $\maxload$. In order to express that $\location$ is the location vector of a balls--into--bins experiment with $\nbins$ bins and $\nballs$ balls, we often write $\location\sim \binsandballs{\nbins}{\nballs}$.

Gonnet \cite{gonnet} proved in the case $\nbins=\nballs$ that \whp\ $\maxbinsandballs{\nbins}{\nbins}=\left(1+\smallo{1}\right)\log n/\log\log n$. Later Raab and Steger \cite{balls_bins1} considered $\maxbinsandballs{\nbins}{\nballs}$ for different ranges of $\nballs$. Amongst other results, they showed that \whp\ $\maxbinsandballs{\nbins}{\nballs}=\left(1+\smallo{1}\right)\log n/\log\log n$ is still true, as long as $\nballs=\Th{\nbins}$. In the following we improve their result.  More precisely, we show that if $\nballs=\smallo{\nbins\log\nbins}$, then \whp\ $\maxbinsandballs{\nbins}{\nballs}$ is actually concentrated at two values.

Before proving that rigorously, we motivate this result by providing the following heuristic. For $l=l(n)\in\N$ we let $X^{(l)}$ be the number of bins with load $l$. We have 
\begin{align}\label{eq:28}
	\expec{X^{(l)}}=\nbins\binom{\nballs}{l}\left(1/n\right)^l\left(1-1/n\right)^{\nballs-l}=:\mu(l).
\end{align}
We expect that the load $l$ of a bin is much smaller than $\nballs$ and therefore we have
\begin{align*}
	\mu(l)=\Th{1}k^le^ll^{-l-1/2}n^{-l+1}.
\end{align*}
Intuitively, the maximum load $\maxload$ should be close to the largest $l$ for which $\mu(l)=\Th{1}$ is satisfied, in other words, $\log \left(\mu(\maxload)\right)$ should be close to 0. This motivates the definition of $\concentration{\nbins}{\nballs}$ in \Cref{def:nu} as the unique positive zero of the function
\begin{align*}
	f(l)=f_{\nbins, \nballs}(l):=l\log k+l-(l+1/2)\log l-(l-1)\log n,
\end{align*}
which is asymptotically equal to $\log\left(\mu(l)\right)$ up to an additive constant. We will use the first and second moment method (see e.g. \cite{probmethod,rg1}) to make that heuristic rigorous and show that the maximum load $\maxload$ is strongly concentrated around $\concentration{\nbins}{\nballs}$.


\begin{thm}\label{thm:concentration_balls_bins}
If $\nballs=\nballs(\nbins)=\bigo{\nbins}$ and $\varepsilon>0$, then \whp
\begin{align*}
\rounddown{\concentration{\nbins}{\nballs}-\varepsilon}\lessorequal \maxbinsandballs{\nbins}{\nballs}\lessorequal \rounddown{\concentration{\nbins}{\nballs}+\varepsilon}.
\end{align*}
\end{thm}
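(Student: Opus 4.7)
The plan is to make the heuristic preceding the theorem rigorous via the first and second moment methods applied to $X^{(l)}:=\#\{j\in[\nbins]:\load_j=l\}$, the number of bins with load exactly $l$. Writing $\mu(l)=\expec{X^{(l)}}$ as in (\ref{eq:28}) and applying Stirling's formula, the first step is to establish
\begin{align*}
\log \mu(l)=f_{\nbins,\nballs}(l)+\bigo{1}
\end{align*}
uniformly for integer $l$ in a window around $\concentration{\nbins}{\nballs}$. This is legitimate because \Cref{lem:nu}\ref{lem:nu8} gives $\concentration{\nbins}{\nballs}=\smallo{\log \nbins}$, so $l=\smallo{\nballs}$ in the relevant range and the Stirling expansion of $\binom{\nballs}{l}$ has bounded error.

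For the upper bound, set $l^+:=\rounddown{\concentration{\nbins}{\nballs}+\varepsilon}+1$, so $l^+\geq \concentration{\nbins}{\nballs}+\varepsilon$. Since each $\load_j$ is marginally $\mathrm{Bin}(\nballs,1/\nbins)$, the union bound gives $\prob{\maxload\geq l^+}\leq \sum_{l\geq l^+}\mu(l)$; the ratio $\mu(l+1)/\mu(l)\leq \nballs/(\nbins l)$ is $\smallo{1}$ for $l\geq l^+$ by \Cref{lem:nu}\ref{lem:nu6}, so this tail sum is $(1+\smallo{1})\mu(l^+)$. Differentiating gives $f'(t)=\log(\nballs/(\nbins t))-1/(2t)$, and monotonicity combined with \Cref{lem:nu}\ref{lem:nu6} yields
\begin{align*}
f(\concentration{\nbins}{\nballs}+\varepsilon)=\int_{\concentration{\nbins}{\nballs}}^{\concentration{\nbins}{\nballs}+\varepsilon}f'(t)\,dt\leq \varepsilon \log\bigl(\nballs/(\nbins\cdot \concentration{\nbins}{\nballs})\bigr)\to -\infty,
\end{align*}
so $\mu(l^+)=e^{f(l^+)+\bigo{1}}\to 0$, and hence $\prob{\maxload\geq l^+}\to 0$.

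For the lower bound, set $l^-:=\rounddown{\concentration{\nbins}{\nballs}-\varepsilon}$. The cases $l^-\leq 0$ and $l^-=1$ are trivial, so assume $l^-\geq 2$, which forces $\concentration{\nbins}{\nballs}\geq 2+\varepsilon$; a straightforward analysis of the defining equation $f_{\nbins,\nballs}(\concentration{\nbins}{\nballs})=0$ together with \Cref{lem:nu} then yields $(l^-)^2=\smallo{\nballs}$ in every such regime. Applying Chebyshev's inequality in the form $\prob{X^{(l^-)}=0}\leq \variance{X^{(l^-)}}/\expec{X^{(l^-)}}^2$, the mirror computation gives $f(\concentration{\nbins}{\nballs}-\varepsilon)\to +\infty$ and hence $\expec{X^{(l^-)}}\to\infty$; a direct expansion of the joint distribution of $(\load_1,\load_2)$, using $(l^-)^2=\smallo{\nballs}$, gives $\prob{\load_1=\load_2=l^-}/\prob{\load_1=l^-}^2=1+\smallo{1}$, so the covariance cross-term in
\begin{align*}
\variance{X^{(l^-)}}=\expec{X^{(l^-)}}+\nbins(\nbins-1)\bigl(\prob{\load_1=\load_2=l^-}-\prob{\load_1=l^-}^2\bigr)
\end{align*}
is $\smallo{\expec{X^{(l^-)}}^2}$. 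Thus $\prob{\maxload\geq l^-}\geq \prob{X^{(l^-)}\geq 1}\to 1$.

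The main obstacle, I expect, is the quantitative control of $f$ near $\concentration{\nbins}{\nballs}$ \emph{uniformly} across the sub-regimes of $\nballs=\bigo{\nbins}$, since $\concentration{\nbins}{\nballs}$ itself ranges from $1+\smallo{1}$ (for very small $\nballs$) up to $\Th{\log\nbins/\log\log\nbins}$ when $\nballs=\Th{\nbins}$. The crucial ingredient is \Cref{lem:nu}\ref{lem:nu6}: via $\concentration{\nbins}{\nballs}=\smallomega{\nballs/\nbins}$ it forces $f'(\concentration{\nbins}{\nballs})\to -\infty$, producing the required divergence of $f(\concentration{\nbins}{\nballs}\pm\varepsilon)$ away from $0$ with enough margin to absorb the $\bigo{1}$ Stirling error in $\mu(l)$.
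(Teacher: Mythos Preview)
Your proposal is correct and follows essentially the same route as the paper's proof: first and second moment on $X^{(l)}$, with \Cref{lem:nu}\ref{lem:nu6} (i.e.\ $\Concentration=\smallomega{\nballs/\nbins}$) as the engine forcing $\mu(l^\pm)\to 0,\infty$. The only substantive differences are cosmetic: the paper disposes of the regime $\nballs\le \nbins^{1/3}$ by a separate one--line argument that \whp\ $\maxload=1$, whereas you absorb it into the ``$l^-\le 1$ is trivial'' case; and your sentence ``$l=\smallo{\nballs}$ so the Stirling expansion has bounded error'' is too weak as written---one needs $l=\bigo{\sqrt{\nballs}}$, which the paper obtains from its case split $\nballs>\nbins^{1/3}$ together with $l=\smallo{\log\nbins}$, and which you yourself correctly invoke later as $(l^-)^2=\smallo{\nballs}$ for the second--moment step (for the upper bound the crude inequality $\binom{\nballs}{l}\le \nballs^l/l!$ already gives $\log\mu(l)\le f(l)+\bigo{1}$ unconditionally, so no harm is done).
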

\begin{proof}
Let $\location\sim \binsandballs{\nbins}{\nballs}$ be the location vector, $\load_j=\load_j(\location)$ the load of bin $\bin_j$ for each $j\in[\nbins]$, and $\maxload=\maxload(\location)$ the maximum load. First we consider the case $\nballs\leq \nbins^{1/3}$. Then we have
\begin{align}\label{eq:18}
	\prob{\maxload=1}=\prod_{i=1}^{\nballs-1}\left(1-\frac{i}{\nbins}\right)\greaterorequal \left(1-\frac{\nballs}{\nbins}\right)^\nballs=1-\smallo{1}.
\end{align}
Due to \Cref{lem:nu}\ref{lem:nu5} and \ref{lem:nu7} we have $1<\concentration{\nbins}{\nballs}\leq 7/4$ for $\nbins$ large enough. Together with (\ref{eq:18}) this shows the statement for the case $\nballs\leq \nbins^{1/3}$. Hence, it remains to consider the case $\nballs>\nbins^{1/3}$. For $l\in[\nballs]$ and $j\in[\nbins]$ we let $X_j^{(l)}=1$ if $\load_j=l$, i.e. the number $\load_j$ of balls (among $\nballs$ balls) in the $j$--th bin $\bin_j$ is equal to $l$, and $X_j^{(l)}=0$ otherwise. In addition, we let $X^{(l)}=\sum_{j=1}^{\nbins}X_j^{(l)}$ be the number of bins with load $l$. Then we have 
$\prob{X_j^{(l)}=1}=\binom{\nballs}{l}\left(1/n\right)^l\left(1-1/n\right)^{\nballs-l}$ and obtain (\ref{eq:28}). If $l=\bigo{\nballs^{1/2}}$, then
$\binom{\nballs}{l}=\Th{1}\nballs^le^l/l^{l+1/2}$,
where we used Stirling's formula for $l!$. Hence, we get 
\begin{align}\label{eq:13}
	\mu(l)=\Th{1}\frac{\nballs^l e^l}{l^{l+1/2}n^{l-1}},
\end{align}
because $\left(1-1/n\right)^{\nballs-l}=\Theta(1)$. For an upper bound of the maximum load $\maxload$ we will use the first moment method. Let $l^\ast=l^\ast(\nbins):=\rounddown{\concentration{\nbins}{\nballs}+\varepsilon}+1$ and $\delta=\delta(n):=l^\ast-\concentration{\nbins}{\nballs}\geq\varepsilon$. Due to \Cref{lem:nu}\ref{lem:nu8} and the assumption $\nballs> \nbins^{1/3}$ we have $l^\ast=\bigo{\nballs^{1/2}}$. Thus, equation (\ref{eq:13}) holds for $l=l^\ast$ and by the definition of $\Concentration=\concentration{\nbins}{\nballs}$ we obtain
\begin{align}
	\mu\left(l^\ast\right)=\Th{1}\frac{\nballs^{\Concentration+\delta}e^{\Concentration+\delta}}{\left(\Concentration+\delta\right)^{\Concentration+\delta+1/2}n^{\Concentration+\delta-1}}=\Th{1}\left(\frac{\nballs e}{\nbins\left(\Concentration+\delta\right)}\right)^\delta\left(\frac{\Concentration}{\Concentration+\delta}\right)^{\Concentration+1/2}.
\end{align}
Together with \Cref{lem:nu}\ref{lem:nu6} this yields  $\mu\left(l^\ast\right)=\smallo{1}$. Due to \Cref{lem:nu}\ref{lem:nu6} we have $\mu\left(l+1\right)/\mu\left(l\right)=\left(\nballs-l\right)/\left(\left(l+1\right)\left(\nbins-1\right)\right)=o(1)$ for all  $l\geq l^\ast$. Hence,
\begin{align*}
	\prob{\maxload\geq l^\ast}\lessorequal \sum_{l\geq l^\ast}\mu(l)=\left(1+\smallo{1}\right)\mu(l^\ast)=o(1).
\end{align*}
For a lower bound, we will show that $\prob{X^{\left(l_\ast\right)}>0}=1-o(1)$, where $l_\ast=l_\ast(\nbins):=\rounddown{\concentration{\nbins}{\nballs}-\varepsilon}$, using the second moment method. In the following we consider the random variables $X_j^{(l)}$ and $X^{(l)}$ only for $l=l_\ast$ and therefore we use $X_j=X_j^{\left(l_\ast\right)}$ and $X=X^{\left(l_\ast\right)}$ for simplicity. In order to apply the second moment method we will show $\expec{X}=\smallomega{1}$ and $\expec{X_i X_j}=\left(1+\smallo{1}\right)\expec{X_i}\expec{X_j}$ for all $i\neq j$. We let $\gamma=\gamma(n):=\Concentration-l_\ast\geq\varepsilon$ and by (\ref{eq:13}), \Cref{lem:nu}\ref{lem:nu6}, and the definition of $\Concentration$ we obtain
\begin{align*}
	\mu\left(l_\ast\right)=\Th{1}\frac{\nballs^{\Concentration-\gamma}e^{\Concentration-\gamma}}{\left(\Concentration-\gamma\right)^{\Concentration-\gamma+1/2}n^{\Concentration-\gamma-1}}=\Th{1}\left(\frac{\nbins\left(\Concentration+\delta\right)}{\nballs e}\right)^\gamma\left(\frac{\Concentration}{\Concentration-\gamma}\right)^{\Concentration+1/2}=\smallomega{1}.
\end{align*}
Next, we note that conditioned on the event $X_i=1$, i.e. $\load_i=l_\ast$, the loads $\load_j$ for $j\neq i$ are distributed like the loads of a balls--into--bins experiment with $\nbins-1$ bins and $\nballs-l_\ast$ balls, and thus
\begin{align*}
	\condprob{X_j=1}{X_i=1}\equal\binom{\nballs-l_\ast}{l_\ast}\left(1/(\nbins-1)\right)^{l_\ast}\left(1-1/(\nbins-1)\right)^{\nballs-2l_\ast}.
\end{align*}
Hence, we obtain
\begin{align*}
	\frac{\expec{X_i X_j}}{\expec{X_i}\expec{X_j}}&\equal\frac{\condprob{X_j=1}{X_i=1}}{\prob{X_j=1}}
	\\
	&\equal\frac{\binom{\nballs-l_\ast}{l_\ast}\left(1/(\nbins-1)\right)^{l_\ast}\left(1-1/(\nbins-1)\right)^{\nballs-2l_\ast}}{\binom{\nballs}{l_\ast}\left(1/n\right)^{l_\ast}\left(1-1/n\right)^{\nballs-l_\ast}}
	\\
	&\equal1+\smallo{1},
\end{align*}
where we used the assumption $\nballs>\nbins^{1/3}$ and the fact $l_\ast=\smallo{\log n}$ due to \Cref{lem:nu}\ref{lem:nu8}. Thus, by the second moment method we obtain $\prob{X>0}=1-o(1)$, which finishes the proof.
\end{proof}



Next, we show that if we consider a \lq small\rq\ subset of bins, then the maximum load in one of these bins is significantly smaller than the maximum load of all bins. We will use this fact later when we relate random forests to the balls--into--bins model (see \Cref{sec:forests}), in which this \lq small\rq\ subset will correspond to the set of all roots.

\begin{lem}\label{lem:max_load_subset}
Let $\nballs=\nballs(n)$ and $t=t(n) \in \N$ be such that $\nballs=\Th{n}$ and $t= \smallo{n^{1-\delta}}$ for some $\delta>0$. Let $\location\sim\binsandballs{\nbins}{\nballs}$, $\maxload=\maxload(\location)$ be the maximum load, and $\maxload_{t}=\maxload_{t}(\location)$ be the maximum load in one of the first $t$ bins. Then, \whp\ $\maxload-\maxload_{ t}\equal\smallomega{1}$.
\end{lem}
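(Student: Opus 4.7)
The plan is to combine the lower bound on $\maxload$ from \Cref{thm:concentration_balls_bins} with a first moment upper bound on $\maxload_t$. The point is that restricting from all $\nbins$ bins to the first $t$ of them multiplies the expected number of bins with load $l$ by a factor $t/\nbins = \smallo{\nbins^{-\delta}}$; since $f_{\nbins,\nballs}$ has slope of order $-\log\log \nbins$ near its zero $\concentration{\nbins}{\nballs}$, this factor translates into a downward shift of the critical load by roughly $\delta\log \nbins/\log\log \nbins$, which is far more than the $\smallomega{1}$ gap we need.

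Pick any $c = c(\nbins) \to \infty$ with $c \log\log \nbins = \smallo{\log \nbins}$ (for instance $c := \log\log\log \nbins$) and set $\ell := \rounddown{\concentration{\nbins}{\nballs} - c} + 1$. I would apply \Cref{thm:concentration_balls_bins} with $\varepsilon = 1/2$ to conclude that whp $\maxload \geq \rounddown{\concentration{\nbins}{\nballs} - 1/2}$. For $\maxload_t$, each load $\load_j$ is distributed as $\mathrm{Bin}(\nballs, 1/\nbins)$, so the union bound gives
\begin{align*}
\prob{\maxload_t \geq \ell} \leq t \cdot \prob{\load_1 \geq \ell} = \frac{t}{\nbins}\sum_{l \geq \ell} \mu(l),
\end{align*}
with $\mu(l)$ as in the proof of \Cref{thm:concentration_balls_bins}. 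Provided this is $\smallo{1}$, combining the two statements yields whp
\begin{align*}
\maxload - \maxload_t \geq \rounddown{\concentration{\nbins}{\nballs} - 1/2} - (\ell - 1) \geq c - 3/2 = \smallomega{1}.
\end{align*}

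The heart of the argument is to bound $(t/\nbins)\sum_{l \geq \ell}\mu(l)$. Since $\mu(l+1)/\mu(l) = (\nballs - l)/((l+1)(\nbins - 1)) = \bigo{1/l}$ uniformly for $l \geq \ell$ (using $\ell = \bigo{\log \nbins/\log\log \nbins}$ via \Cref{lem:nu}\ref{lem:nu1}), the tail sum is $(1+\smallo{1})\mu(\ell)$, exactly as in the proof of \Cref{thm:concentration_balls_bins}. Because $\ell = \bigo{\nballs^{1/2}}$, the estimate $\mu(l) = \Th{\nballs^l e^l/(l^{l+1/2}\nbins^{l-1})}$ applies and yields $\log \mu(\ell) = f_{\nbins,\nballs}(\ell) + \bigo{1}$. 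A Taylor expansion of $f_{\nbins,\nballs}$ around $\concentration{\nbins}{\nballs}$, together with $f_{\nbins,\nballs}(\concentration{\nbins}{\nballs}) = 0$ and
\begin{align*}
f_{\nbins,\nballs}'(l) = \log(\nballs/\nbins) - \log l - 1/(2l) = -(1+\smallo{1})\log\log \nbins
\end{align*}
for $l = \Th{\concentration{\nbins}{\nballs}}$ (using \Cref{lem:nu}\ref{lem:nu1}), then gives $f_{\nbins,\nballs}(\ell) = (1+\smallo{1})c\log\log \nbins$. Hence $(t/\nbins)\mu(\ell) = \exp(-\delta \log \nbins + (1+\smallo{1}) c \log\log \nbins + \bigo{\log\log \nbins}) = \smallo{1}$, as required.

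The main delicate point is ensuring that in the Taylor expansion the linear term $-c f_{\nbins,\nballs}'(\concentration{\nbins}{\nballs}) = (1+\smallo{1})c\log\log \nbins$ dominates the quadratic remainder $\bigo{c^2/\concentration{\nbins}{\nballs}}$; this requires $c = \smallo{\concentration{\nbins}{\nballs}}$, which is automatic under our slow growth assumption $c\log\log\nbins = \smallo{\log\nbins}$. Beyond this routine asymptotic estimate, the argument is a clean application of the first moment method combined with the already proven two point concentration for $\maxload$.
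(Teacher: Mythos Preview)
Your argument is correct and takes a genuinely different route from the paper's proof. The paper conditions on the total number $S_t$ of balls landing in the first $t$ bins, bounds $S_t$ by Chebyshev, and then applies \Cref{thm:concentration_balls_bins} in a black--box fashion to the induced balls--into--bins experiment with $t$ bins; this yields \whp\ $\maxload_t \leq (1-\delta+\smallo{1})\log n/\log\log n$ via \Cref{lem:nu}\ref{lem:nu1}, and the gap $(\delta+\smallo{1})\log n/\log\log n$ follows. You instead bypass the conditioning entirely and run a direct first moment bound on $\maxload_t$, exploiting that the factor $t/\nbins \leq \nbins^{-\delta}$ in the union bound shifts the critical load downward by an amount governed by the slope of $f_{\nbins,\nballs}$ near its zero. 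Your approach is more hands--on and self--contained (no Chebyshev, no reduction to a smaller balls--into--bins instance), while the paper's approach is more modular and immediately gives the sharper quantitative gap of order $\delta\log n/\log\log n$; your version yields $c - 3/2$ for any slowly growing $c$, which is enough for the stated $\smallomega{1}$ but would need the choice $c$ pushed closer to $\delta\,\nu(\nbins,\nballs)$ to match the paper's bound. Both arguments are clean; the only care needed in yours is the Taylor control you already flagged, and $c = \smallo{\nu(\nbins,\nballs)}$ indeed suffices for the quadratic remainder $\bigo{c^2/\nu(\nbins,\nballs)}$ to be absorbed.
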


\begin{proof}
We observe that $\maxload-\maxload_{ t}$ is strictly decreasing in $t$. Thus, it suffices to show $\maxload-\maxload_{ t}\equal\smallomega{1}$ for $t=\rounddown{n^{1-\delta}}$ and $\delta\in (0,1)$. We denote by $S_t$ the total number of balls in the first $t$ bins. We have $\expec{S_t}=t\nballs/\nbins$ and $\variance{S_t}\leq \expec{S_t}$. Hence, by Chebyshev's inequality, we have \whp\
\begin{align}\label{eq:19}
	S_t\lessorequal \frac{t\nballs}{\nbins}+\left(\frac{t\nballs}{\nbins}\right)^{2/3}=:\bar{l}=\bar{l}(n).
\end{align}
Conditioned on the event $S_t=l$ for $l\in\N$, $\maxload_{t}$ is distributed like $\maxbinsandballs{t}{l}$. Thus,
\begin{align*}
	\prob{\maxload_{t}\lessorequal \rounddown{\concentration{t}{\bar{l}}}+1}&\greaterorequal \sum_{l=1}^{\bar{l}}\probLarge{S_t=l}\condprob{\maxload_{t}\lessorequal \rounddown{\concentration{t}{\bar{l}}}+1}{S_t=l}
	\\
	&\greaterorequal \prob{S_t\leq \bar{l}}\prob{\maxbinsandballs{t}{\bar{l}}\lessorequal \rounddown{\concentration{t}{\bar{l}}}+1}
	\\
	&\equal\left(1-\smallo{1}\right),
\end{align*}
where the last equality follows from \Cref{thm:concentration_balls_bins} and (\ref{eq:19}). Due to \Cref{lem:nu}\ref{lem:nu1} and the assumption $t=\rounddown{n^{1-\delta}}$ we get $\concentration{t}{\bar{l}}=\left(1+\smallo{1}\right)\log t/\log \log t=\left(1-\delta+\smallo{1}\right)\log n/\log \log n$, which yields \whp\ $\maxload_{t}\leq\left(1-\delta+\smallo{1}\right)\log n/\log\log n$. By \Cref{lem:nu}\ref{lem:nu1} we have \whp\ $\maxload=\left(1+\smallo{1}\right)\log n/\log\log n$. Hence, we obtain \whp\ $\maxload-\maxload_{t}\greaterorequal \left(\delta+\smallo{1}\right)\log n/\log\log n=\smallomega{1}$, as desired.
\end{proof}

\section{\ER\ random graph and graphs without complex components}\label{sec:ergraph}
We start this section by providing a relation between the degree sequence of the \ER\ random graph $G(n,m)$ and the loads of a balls--into--bins model. In particular, this yields a refined version of \Cref{thm:max_degree_ergraph}. Later we will use that result to prove \Cref{thm:main_sub}, since the random planar graph $\planargraph(n,m)$ behaves similarly like $G(n,m)$ when $m\leq n/2+\bigo{n^{2/3}}$ (see \Cref{thm:non_complex}).

\begin{thm}\label{thm:G_n_m_bins_balls}
	Let $m=m(n)=\bigo{n}$ and $\degreesequence=\degreesequence(n)=\left(\degree{1}{G}, \ldots, \degree{n}{G}\right)$ be the degree sequence of $G=G(n,m)\ur \mathcal G(n,m)$. Moreover, let $\location=\location(n)\sim \binsandballs{n}{2m}$,  $\loadvector=\loadvector(n)=\loadvector(\location)$ be the vector of loads of $\location$, and $\varepsilon>0$. Then
	\begin{enumerate}
	\item\label{thm:G_n_m_bins_balls1}
	the degree sequence $\degreesequence$ is contiguous with respect to $\loadvector$, i.e.	$\contiguous{\degreesequence}{\loadvector}$;
	\item\label{thm:G_n_m_bins_balls2}
	\whp\ $\rounddown{\concentration{n}{2m}-\varepsilon}\lessorequal \maxdegree{G}\lessorequal \rounddown{\concentration{n}{2m}+\varepsilon}$.
	\end{enumerate}

\end{thm}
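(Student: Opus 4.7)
The plan, following the sketch in Section~3.1, is to realise $G(n,m)$ through the balls--into--bins model by pairing consecutive balls into edges. Given a location vector $\location\sim\binsandballs{n}{2m}$, I would introduce the random multigraph $\multigraph=\multigraph(\location)$ on vertex set $[n]$ with edge multiset $\bigl\{\{\locationBit_{2i-1},\locationBit_{2i}\}:i\in[m]\bigr\}$. The pivotal observation is that each ball landing in bin~$\bin_j$ produces exactly one endpoint at vertex~$j$, so the degree of~$j$ in $\multigraph$ equals the load $\load_j$; in particular the (random) degree sequence of $\multigraph$ coincides with $\loadvector$.

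My first step would be to verify that, conditional on $\multigraph$ being simple, $\multigraph$ is uniformly distributed on $\mathcal{G}(n,m)$. A direct counting argument works: every outcome of $\location$ has probability $n^{-2m}$, and each $H\in\mathcal{G}(n,m)$ arises from precisely $m!\cdot 2^m$ outcomes --- one for each ordering of $\edgeSet{H}$ together with an orientation of each edge. The main step, and the only nontrivial obstacle, is showing that $\prob{\multigraph\text{ is simple}}=\Th{1}$ whenever $m=\bigo{n}$. By the same counting,
\[\prob{\multigraph\text{ is simple}}\equal\frac{2^m\,m!\,\binom{\binom{n}{2}}{m}}{n^{2m}}\equal\left(1-\frac{1}{n}\right)^m\prod_{i=1}^{m-1}\left(1-\frac{i}{\binom{n}{2}}\right),\]
and an elementary estimate using that $m^2/\binom{n}{2}=\bigo{1}$ and $m^3/\binom{n}{2}^2=\smallo{1}$ under $m=\bigo{n}$ shows both factors are $\Th{1}$. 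An alternative route would be to check that the number of self--loops and the number of parallel edge pairs are jointly asymptotically Poisson with bounded means, but the direct formula is cleaner.

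With these two facts in hand, part~\ref{thm:G_n_m_bins_balls1} follows quickly. Fix a sequence $I=I(n)\subseteq\N^n$ with $\prob{\loadvector\in I}\to 1$. By the uniformity above, $\degreesequence$ is distributed like the degree sequence of $\multigraph$ conditioned on $\multigraph$ being simple, and since the degree sequence of $\multigraph$ always equals $\loadvector$,
\[\prob{\degreesequence\notin I}\equal\condprob{\loadvector\notin I}{\multigraph\text{ is simple}}\lessorequal \frac{\prob{\loadvector\notin I}}{\prob{\multigraph\text{ is simple}}}\equal\smallo{1},\]
which is precisely $\contiguous{\degreesequence}{\loadvector}$. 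Part~\ref{thm:G_n_m_bins_balls2} then follows at once by applying \Cref{thm:concentration_balls_bins} to $\loadvector$ and transferring the resulting event, which depends only on the load vector, via part~\ref{thm:G_n_m_bins_balls1} to the degree sequence of~$G$.
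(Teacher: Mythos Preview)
Your proposal is correct and follows essentially the same route as the paper: construct the random multigraph $\multigraph$ by pairing consecutive balls, identify its degree sequence with $\loadvector$, show $\prob{\multigraph\text{ is simple}}=\Th{1}$ via the same product formula, and deduce contiguity and then part~\ref{thm:G_n_m_bins_balls2} from \Cref{thm:concentration_balls_bins}. Your conditional--probability inequality for contiguity is slightly more explicit than the paper's phrasing, but the argument is the same.
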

\begin{proof}
We consider the random multigraph $\multigraph$ given by $\vertexSet{\multigraph}=[n]$ and $\edgeSet{\multigraph}=\setbuilder{\left\{\locationBit_{2i-1}, \locationBit_{2i}\right\}}{i \in [m]}$, where $\location=\left(\locationBit_1, \ldots, \locationBit_{2m}\right)$ is the location vector. We observe that for $v\in[n]$ the load $\load_v$ equals the degree $\degree{v}{\multigraph}$. For each graph $H\in \mathcal{G}(n,m)$ we have $\prob{\multigraph=H}=2^mm!/n^{2m}$. Hence, conditioned on the event that $\multigraph$ is simple, $\multigraph$ is distributed like $G$. Moreover, for $n$ large enough we have \begin{align*}
	\prob{\multigraph \text{ is simple}}&\equal\prob{\multigraph \text{ has no loop}}\cdot\condprob{\multigraph \text{ has no multiple edge}}{\multigraph \text{ has no loop}}
	\\
	&\equal
	\left(1-\frac{1}{n}\right)^m\hspace{0.05cm}\cdot\hspace{0.05cm} \prod_{i=0}^{m-1}\left(1-\frac{i}{\binom{n}{2}}\right)
	\\
	&\greaterorequal
	\exp\left(\frac{-2m}{n}-\frac{4m^2}{n^2}\right)\greater\gamma,
\end{align*}
for a suitable chosen $\gamma>0$, since $m=\bigo{n}$. This shows $\liminf_{n\to \infty}\prob{\multigraph \text{ is simple}}>0$. Thus, each property that holds \whp\ in $\multigraph$, is also true \whp\ in $G$. In particular, the degree sequence $\degreesequence$ of $G$ is contiguous with respect to the degree sequence $\loadvector$ of $\multigraph$, i.e. $\contiguous{\degreesequence}{\loadvector}$. Together with \Cref{thm:concentration_balls_bins} this yields \whp\ $\rounddown{\concentration{n}{2m}-\varepsilon}\lessorequal \maxdegree{G}\lessorequal \rounddown{\concentration{n}{2m}+\varepsilon}$, as desired.
\end{proof}

We recall that we denote by $\nocomplex(n,m)$ a graph chosen uniformly at random from the class $\nocomplexclass(n,m)$ consisting of graphs having no complex components, vertex set $[n]$, and $m$ edges. Later $\nocomplex(n,m)$ will take the role of the non--complex part of the random planar graph. In this case the relation $m=n/2+\bigo{n^{2/3}}$ is satisfied (see \Cref{thm:internal_structure}). In \cite{uni} Britikov showed that in this range $\nocomplex(n,m)$ behaves similarly like $G(n,m)$. 

\begin{thm}[\cite{uni}]\label{thm:non_complex}
	Let $m=m(n)\leq n/2+\bigo{n^{2/3}}$ and $G=G(n,m)\ur \mathcal G(n,m)$ be the uniform random graph. Then 
	\begin{align*}
	\liminf_{n \to \infty}~ \prob{G \text{ has no complex component }}>0.
	\end{align*}
\end{thm}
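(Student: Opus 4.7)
The plan is to show $|\nocomplexclass(n,m)|/|\mathcal{G}(n,m)| \ge c > 0$, since this ratio equals $\prob{G \text{ has no complex component}}$.

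First, I would partition $\mathcal{G}(n,m)$ according to the complex part. For each $H\in\mathcal{G}(n,m)$, setting $V=\vertexSet{\complexpart{H}}$, the graph $H$ decomposes as the disjoint union of $\complexpart{H}$ on $V$ and the restriction of $H$ to $[n]\setminus V$, the latter lying in $\nocomplexclass(n-|V|, m-\numberEdges{\complexpart{H}})$. Conversely, every such pair glues uniquely to an element of $\mathcal{G}(n,m)$. Hence
\begin{align*}
|\mathcal{G}(n,m)| = |\nocomplexclass(n,m)| + \sum_{q\ge 4}\binom{n}{q}\sum_{Q} |\nocomplexclass(n-q, m-\numberEdges{Q})|,
\end{align*}
where the inner sum runs over complex graphs $Q$ on vertex set $[q]$ (the bound $q\ge 4$ coming from the fact that the smallest complex component is $K_4$ minus an edge). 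It therefore suffices to show that the double sum on the right is at most a constant multiple of $|\nocomplexclass(n,m)|$.

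Second, I would enumerate both sides asymptotically. Using the core--kernel decomposition from \Cref{sub:decomposition}, a complex graph $Q$ of excess $\ell := \numberEdges{Q}-\numberVertices{Q}$ is obtained from a cubic multigraph kernel of excess $\ell$ by subdividing each edge into a path and attaching a rooted tree at every vertex of the resulting core; combined with the classical enumeration of cubic kernels and Cayley--type formulas for forests, this yields explicit asymptotics for complex graphs on $[q]$ graded by $\ell$. In parallel, $|\nocomplexclass(n', m')|$ with $m'=n'/2+\bigo{n'^{2/3}}$ admits an asymptotic formula obtained by extracting the coefficient of $x^{n'}/n'!$ from the EGF $\exp(T(x)+U(x))$ of tree and unicyclic connected components via Cauchy's formula and the saddle--point method.

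Third, I would substitute these enumerations into the displayed identity. After applying Stirling and matching the saddle points, the contribution of each fixed excess $\ell$ to the double sum becomes $\bigo{1}\cdot|\nocomplexclass(n,m)|$, with the $\bigo{1}$ constants summable over $\ell\ge 1$ precisely when $m-n/2=\bigo{n^{2/3}}$. The main obstacle is this summability: in the critical window the complex excess can take any bounded value with comparable probability, so the saddle--point estimates must be executed uniformly in $\ell$. The threshold $n^{2/3}$ is exactly where this uniform control breaks down, since for $m-n/2\gg n^{2/3}$ the excess diverges in probability and the ratio tends to $0$. This uniform enumeration is the technical heart of Britikov's argument in \cite{uni}.
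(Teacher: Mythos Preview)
The paper does not prove \Cref{thm:non_complex}; it is quoted verbatim from Britikov~\cite{uni} and used as a black box. There is therefore no ``paper's own proof'' to compare against---the authors simply invoke the result.

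Your sketch is in the spirit of Britikov's original argument: decompose by the complex part, enumerate complex graphs by excess via the kernel construction, enumerate $\nocomplexclass(n',m')$ by saddle--point analysis of the tree/unicyclic EGF, and show that the contributions from positive excess sum to $\bigo{1}\cdot|\nocomplexclass(n,m)|$ uniformly in the critical window. That is exactly the content of~\cite{uni}, so your outline is accurate as a summary of the cited proof rather than an alternative to anything in the present paper. One minor slip: the enumeration of complex graphs on $[q]$ with fixed excess $\ell$ also requires a compensation factor for kernel automorphisms and for multi-edges/loops when passing from the cubic kernel to the core, and the saddle--point analysis of $|\nocomplexclass(n',m')|$ in the critical window involves the Airy function rather than a plain Gaussian, which is where the uniformity in $\ell$ becomes delicate; but you correctly identify this uniform control as the crux.
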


Combining Theorems \ref{thm:G_n_m_bins_balls}\ref{thm:G_n_m_bins_balls2} and \ref{thm:non_complex} we can deduce that \whp\ $\maxdegree{\nocomplex(n,m)}$ is concentrated at two values.

\begin{lem}\label{lem:random_non_complex}
	Let $m=m(n)= n/2+\bigo{n^{2/3}}$, $\nocomplex=\nocomplex(n,m)\ur \nocomplexclass(n,m)$ be a random graph without complex components, and $\varepsilon>0$. Then \whp\ $\rounddown{\specialconcentration{n}-\varepsilon}\lessorequal \maxdegree{\nocomplex}\lessorequal \rounddown{\specialconcentration{n}+\varepsilon}$.
\end{lem}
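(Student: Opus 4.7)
The plan is to combine \Cref{thm:G_n_m_bins_balls}\ref{thm:G_n_m_bins_balls2} with Britikov's \Cref{thm:non_complex} via a conditioning argument. Since $G(n,m)$ has no complex component with probability bounded away from zero, its conditional distribution given this event is uniform over $\nocomplexclass(n,m)$ and hence equal in law to $\nocomplex$. Thus any property holding whp for $G(n,m)$ also holds whp for $\nocomplex$, and the lemma follows once the $\concentration{n}{2m}$--bounds on $\maxdegree{G(n,m)}$ from \Cref{thm:G_n_m_bins_balls} are rephrased as $\specialconcentration{n}$--bounds.

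To carry out this rephrasing, I would first apply \Cref{thm:G_n_m_bins_balls}\ref{thm:G_n_m_bins_balls2} with $\varepsilon/2$ in place of $\varepsilon$ to obtain that whp
\[
\rounddown{\concentration{n}{2m}-\varepsilon/2} \leq \maxdegree{G(n,m)} \leq \rounddown{\concentration{n}{2m}+\varepsilon/2}.
\]
Because $m = n/2 + \bigo{n^{2/3}}$, the shift $d := |2m - n|$ satisfies $d = \bigo{n^{2/3}} = \smallo{n(\log\log n)^2/\log n}$. Invoking \Cref{lem:nu}\ref{lem:nu3} with $k := \min\{n, 2m\} = \Th{n}$ and shift $d$, combined with the monotonicity from \Cref{lem:nu}\ref{lem:nu4} to cover both signs of $2m - n$, yields $\concentration{n}{2m} = \specialconcentration{n} + \smallo{1}$. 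Hence $|\concentration{n}{2m} - \specialconcentration{n}| < \varepsilon/2$ for all sufficiently large $n$, so each floor above is sandwiched between $\rounddown{\specialconcentration{n}-\varepsilon}$ and $\rounddown{\specialconcentration{n}+\varepsilon}$, giving whp
\[
\rounddown{\specialconcentration{n}-\varepsilon} \leq \maxdegree{G(n,m)} \leq \rounddown{\specialconcentration{n}+\varepsilon}.
\]

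To conclude, let $\property$ denote the graph property $\big\{H : \rounddown{\specialconcentration{n}-\varepsilon} \leq \maxdegree{H} \leq \rounddown{\specialconcentration{n}+\varepsilon}\big\}$. By \Cref{thm:non_complex} there is a constant $\gamma > 0$ with $\prob{G(n,m) \in \nocomplexclass(n,m)} \geq \gamma$ for all large $n$. Since conditional on $\{G(n,m) \in \nocomplexclass(n,m)\}$ the graph $G(n,m)$ is uniform over $\nocomplexclass(n,m)$ and therefore distributed as $\nocomplex$, we conclude
\[
\prob{\nocomplex \notin \property} = \condprob{G(n,m) \notin \property}{G(n,m) \in \nocomplexclass(n,m)} \leq \frac{\prob{G(n,m) \notin \property}}{\gamma} = \smallo{1},
\]
which is the claim. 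Given the strength of the cited results, the only genuine piece of work is the continuity passage from $\concentration{n}{2m}$ to $\specialconcentration{n}$ via \Cref{lem:nu}\ref{lem:nu3}; no step presents a substantive obstacle.
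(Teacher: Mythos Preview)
Your proposal is correct and follows essentially the same approach as the paper: both combine \Cref{thm:G_n_m_bins_balls}\ref{thm:G_n_m_bins_balls2} with \Cref{thm:non_complex} via the conditioning/contiguity argument, and both invoke \Cref{lem:nu}\ref{lem:nu3} to pass from $\concentration{n}{2m}$ to $\specialconcentration{n}$. The only cosmetic difference is the order---the paper first transfers the $\concentration{n}{2m}$--bounds to $\nocomplex$ and then converts to $\specialconcentration{n}$, whereas you convert first and transfer afterwards---and you spell out the contiguity step and the two-sided use of \Cref{lem:nu}\ref{lem:nu3} more explicitly than the paper does.
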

\begin{proof}
Combining Theorems \ref{thm:G_n_m_bins_balls}\ref{thm:G_n_m_bins_balls2} and \ref{thm:non_complex} yields \whp
\begin{align}\label{eq:17}
	\rounddown{\concentration{n}{2m}-\varepsilon/2}\lessorequal \maxdegree{\nocomplex}\lessorequal \rounddown{\concentration{n}{2m}+\varepsilon/2}.
\end{align}
Using \Cref{lem:nu}\ref{lem:nu3} we obtain $\concentration{n}{2m}=\specialconcentration{n}+\smallo{1}$. Together with (\ref{eq:17}) this shows the statement. 
\end{proof}

\section{Random complex part and forests with specified roots}\label{sec:forests}
The goal of this section is to prove that \whp\ the maximum degree of a random complex part is concentrated at two values (see \Cref{thm:random_complex_part}\ref{thm:random_complex_part2}). As a random complex part can be constructed by using a random forest, we start by analysing the class $\forestclass(n,\ntrees)$ of forests on vertex set $[n]$ having $\ntrees$ trees (some of which might just be isolated vertices) such that the vertices $1, \ldots, \ntrees$ lie all in different trees. 

In \Cref{sub:pruefer} we generalise the concept of Prüfer sequences to forests. Then we use that idea to determine the maximum degree in a random forest in \Cref{sub:random_forest}. Finally, we state the concentration result on the maximum degree in a random complex part in \Cref{sub:random_complex_part}.

\subsection{Prüfer sequences for forests with specified roots}\label{sub:pruefer}
In the following we describe a bijection between $\forestclass(n, \ntrees)$ and $\sequences{n}{\ntrees}:=[n]^{n-\ntrees-1}\times [\ntrees]$, similar to the Prüfer sequence for trees (see e.g. \cite{book_pruefer, book_pruefer2}). Given a forest $\forest \in \forestclass(n,\ntrees)$ we construct a sequence $\left(\forest_0, \ldots, \forest_{n-\ntrees}\right)$ of forests and two sequences $\left(x_1, \ldots, x_{n-\ntrees}\right)$ and $\left(y_1, \ldots, y_{n-\ntrees}\right)$ of vertices as follows. We start with $\forest_0:=\forest$. Given $\forest_{i-1}$ for an $i\in[n-\ntrees]$, we let $y_i$ be the leaf with largest label in $\forest_{i-1}$ and $x_i$ be the unique neighbour of $y_i$. Furthermore, we obtain $\forest_i$ by deleting the edge $x_iy_i$ in $\forest_{i-1}$. We note that this construction is always possible, since $\forest_{i-1}$ has $n-t-i+1$ edges and therefore at least one leaf. We call 
\begin{align}\label{eq:20}
\prueferseqence(\forest):=\left(x_1, \ldots, x_{n-\ntrees}\right)
\end{align}
the {\em Prüfer sequence} of $\forest$. We will show that $\prueferseqence$ is a bijection between $\forestclass(n, \ntrees)$ and $\sequences{n}{\ntrees}$. In addition, we will prove that the number of occurrences of a vertex $v\in[n]$ in the Prüfer sequences is determined by the degree of the vertex. To that end, given $v\in[n]$ and $\mathbf{w}=\left(w_1, \ldots, w_{n-\ntrees}\right)\in[n]^{n-\ntrees}$ we denote by $\frequency{v}{\mathbf{w}}:=\left|\setbuilder{i\in[n-\ntrees]}{w_i=v}\right|$ the number of occurrences of $v$ in $\mathbf{w}$.
\begin{thm}\label{thm:pruefer}
Let $n, \ntrees\in \N$ and $\forestclass(n,\ntrees)$ be the class of forests on vertex set $[n]$ consisting of $\ntrees$ trees such that the vertices $1, \ldots, \ntrees$ lie all in different trees. In addition, let $\sequences{n}{\ntrees}=[n]^{n-\ntrees-1}\times [\ntrees]$ and $\prueferseqence(\forest)$ be the Prüfer sequence of $F\in\forestclass(n,\ntrees)$ as defined in (\ref{eq:20}). Then $\prueferseqence:\forestclass(n,\ntrees)\to \sequences{n}{\ntrees}$ is a bijection and for $\forest\in \forestclass(n,\ntrees)$ and $v\in[n]$ we have \begin{align}\label{eq:12}
	\degree{v}{\forest}\equal
	\begin{cases}
		\frequency{v}{\prueferseqence(\forest)} & \text{~~if~~} v\in [\ntrees]
		\\
		\frequency{v}{\prueferseqence(\forest)}+1 & \text{~~if~~} v\in[n]\setminus [\ntrees].
	\end{cases}
\end{align}
\end{thm}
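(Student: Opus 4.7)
The plan is to prove three claims in order: (I) that $\psi(F) \in \sequences{n}{t}$, i.e.\ $x_{n-t} \in [t]$; (II) the degree formula (\ref{eq:12}); and (III) bijectivity of $\psi$. All three rest on the following structural invariant, which I would prove by induction on $i$: for every $i \in \{0, 1, \ldots, n-t-1\}$, each component of $F_i$ either contains exactly one vertex of $[t]$ or is a singleton whose unique vertex lies in $[n] \setminus [t]$. The base case $i=0$ is the defining property of $\forestclass(n,t)$. For the inductive step, deleting $x_i y_i$ splits one component $C$ of $F_{i-1}$ into $\{y_i\}$ and the remainder of $C$; since $C$ contains an edge it is not a singleton and so, by induction, contains exactly one $[t]$-vertex. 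I claim $y_i \notin [t]$: any non-trivial tree has at least two leaves, but $C$ has at most one $[t]$-vertex, so some leaf of $C$ lies outside $[t]$, and since $[t] = \{1, \ldots, t\}$ sits strictly below $[n] \setminus [t] = \{t+1, \ldots, n\}$, the globally largest leaf $y_i$ must belong to $[n] \setminus [t]$. Thus the singleton $\{y_i\}$ satisfies the invariant, and the remainder of $C$ retains the unique $[t]$-vertex.

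For (I), I apply the invariant to $F_{n-t-1}$: this forest has $n-1$ components, $n-2$ of them singletons and one consisting of a single edge with endpoints $u < v$. Being non-trivial, this edge-component contains exactly one $[t]$-vertex, which must be $u = x_{n-t}$ since $[t]$-labels lie below $[n]\setminus[t]$-labels. For (II), the invariant gives $y_j \notin [t]$ for all $j$, and the $y_j$'s are pairwise distinct because once $y_j$ becomes isolated in $F_j$ it cannot reappear as a leaf; with $n-t$ distinct elements in the set $[n]\setminus[t]$ of size $n-t$, we get $\{y_1, \ldots, y_{n-t}\} = [n] \setminus [t]$. Each of the $d_v$ edges at $v$ is removed exactly once, at a step $j$ with $v \in \{x_j, y_j\}$: for $v \in [t]$ one has $v \neq y_j$ for all $j$, so all $d_v$ removals contribute to $\#(v, \psi(F))$, giving $\#(v, \psi(F)) = d_v$; for $v \in [n] \setminus [t]$ exactly one removal has $v = y_j$ (uncounted) and the remaining $d_v - 1$ have $v = x_j$, giving $\#(v, \psi(F)) = d_v - 1$.

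For (III), I would construct an explicit inverse $\phi : \sequences{n}{t} \to \forestclass(n,t)$: given $(x_1, \ldots, x_{n-t})$, set
\[
y_i := \max \bigl(([n] \setminus [t]) \setminus (\{y_1, \ldots, y_{i-1}\} \cup \{x_i, \ldots, x_{n-t}\})\bigr)
\]
and output the graph with edge set $\{\{x_i, y_i\} : i \in [n-t]\}$. The rule is well-posed: the pool $[n] \setminus ([t] \cup \{y_1, \ldots, y_{i-1}\})$ has $n-t-i+1$ elements, while $\{x_i, \ldots, x_{n-t}\} \cap ([n]\setminus[t])$ has at most $n-t-i$ distinct elements (crucially using $x_{n-t} \in [t]$, which removes one element from the tail), so at least one candidate remains. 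Then $\phi \circ \psi = \mathrm{id}_{\forestclass(n,t)}$ follows by showing that this rule reproduces the Prüfer leaf-removal rule on $F$, which in turn is a direct consequence of the leaf characterisation implicit in (II): $v \in [n]\setminus[t]$ is a leaf of $F_{i-1}$ iff $v$ has not yet been removed and does not reappear in $\{x_i, \ldots, x_{n-t}\}$.

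The main technical obstacle is verifying that $\phi$ actually lands in $\forestclass(n,t)$---a forest with exactly $t$ components, each containing one of $1, \ldots, t$. I would handle this by reverse induction on $i$, noting that when the edge $\{x_i, y_i\}$ is added, the vertex $y_i$ does not appear as any later-added $x_j$ or $y_j$ with $j > i$ by the choice rule, so attaching $y_i$ to the existing partial structure introduces no cycle, keeps the roots $1, \ldots, t$ in distinct trees, and yields exactly $n-t$ edges. Alternatively---and more economically---one can bypass this check by invoking the classical identity $|\forestclass(n,t)| = t\, n^{n-t-1} = |\sequences{n}{t}|$ (a generalisation of Cayley's formula for rooted forests) and concluding bijectivity of $\psi$ from injectivity alone, injectivity being immediate because the formula above recovers the $y_i$'s, and hence the edge set of $F$, from $\psi(F)$.
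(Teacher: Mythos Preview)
Your overall strategy matches the paper's: prove that no root is ever removed as a $y_i$, derive the degree formula from $\{y_1,\ldots,y_{n-t}\}=[n]\setminus[t]$, and construct an explicit inverse. The technical packaging differs in two places. First, the paper's one-line reason for $y_i\notin[t]$ is simply that each root $r$ is the smallest label in its component of every $F_i$ (since all non-root labels exceed $t$), whereas you route this through a component invariant; both work. Second, your decoding rule $y_i=\max\bigl(([n]\setminus[t])\setminus(\{y_1,\ldots,y_{i-1}\}\cup\{x_i,\ldots,x_{n-t}\})\bigr)$ is the ``tail'' formulation of Pr\"ufer decoding, while the paper uses the equivalent ``virtual degree'' formulation $\tilde y_i=\max\{v:\tilde d_{i-1}(v)=1\}$; these are interchangeable.

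There is, however, a genuine gap in your route (a). You verify $\psi:\forestclass(n,t)\to\sequences{n}{t}$, $\phi:\sequences{n}{t}\to\forestclass(n,t)$, and $\phi\circ\psi=\mathrm{id}$. That yields only injectivity of $\psi$; to conclude bijectivity you still need $\psi\circ\phi=\mathrm{id}$, which you never address. The paper does verify this direction explicitly (its claim~(iv)), and in your framework it amounts to checking that, in the graph $\phi(\mathbf w)$, the largest leaf at step $i$ is precisely your $y_i$. This follows from the same leaf characterisation you quote---for $v\in[n]\setminus[t]$, $v$ is a leaf of the partial graph $G_{i-1}$ with edge set $\{x_jy_j:j\ge i\}$ iff $v\notin\{y_1,\ldots,y_{i-1}\}\cup\{x_i,\ldots,x_{n-t}\}$---but that characterisation needs to be proved for $\phi(\mathbf w)$, not just for forests arising as $\psi$-preimages. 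Your alternative (b) via $|\forestclass(n,t)|=t\,n^{n-t-1}$ does close the argument, but note it imports a result that is most commonly \emph{derived} from this very bijection, so you should be prepared to cite an independent proof (e.g.\ via the matrix-tree theorem).
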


\begin{proof}
We start by proving (\ref{eq:12}). To that end, let $r\in[\ntrees]$ be a root vertex. Throughout the construction of $\prueferseqence(\forest)$ the root $r$ is always the vertex with smallest label in the component of $\forest_{i}$ containing $r$. This implies $r\neq y_i$ for all $i\in[n-\ntrees]$. As the elements of the sequence $\mathbf{y}=\left(y_1, \ldots, y_{n-\ntrees}\right)$ are all distinct, we obtain 
\begin{align}\label{eq:21}
	\frequency{v}{\mathbf{y}}=
	\begin{cases}
		0 & \text{~~if~~} v\in [\ntrees]
		\\
		1 & \text{~~if~~} v\in[n]\setminus [\ntrees].
	\end{cases}
\end{align}
This proves (\ref{eq:12}), since $\degree{v}{\forest}=\frequency{v}{\mathbf{x}}+\frequency{v}{\mathbf{y}}$.

Next, we provide an algorithm that builds a graph $\prueferinvers(\mathbf{w})$ for each $\mathbf{w}\in \sequences{n}{\ntrees}$. Later we will see that the algorithm indeed reconstructs $\forest\in\forestclass(n,\ntrees)$ if the input is $\mathbf{w}=\prueferseqence(\forest)$. Let $\mathbf{w}=\left(w_1, \ldots, w_{n-\ntrees}\right)\in \sequences{n}{\ntrees}$ be given. We construct sequences $(\tilde{x}_1, \ldots, \tilde{x}_{n-\ntrees})$ and $(\tilde{y}_1, \ldots, \tilde{y}_{n-\ntrees})$ of vertices, a sequence $\left(\tilde{\forest}_0, \ldots, \tilde{\forest}_{n-\ntrees}\right)$ of forests and for each $v\in[n]$ a sequence $(\tilde{d}_0(v), \ldots, \tilde{d}_{n-\ntrees}(v))$ of degrees as follows. We start with $\vertexSet{\tilde{\forest}_0}=[n]$, $\edgeSet{\tilde{\forest}_0}=\emptyset$, $\tilde{d}_0(v)=\frequency{v}{\mathbf{w}}$ if $v\in [\ntrees]$, and $\tilde{d}_0(v)=\frequency{v}{\mathbf{w}}+1$ if $v\in [n]\setminus[\ntrees]$. For $i\in[n-\ntrees]$ we set $\tilde{x}_i=w_i$ and $\tilde{y}_i=\max\setbuilder{v}{\tilde{d}_{i-1}(v)=1}$. In addition, we let $\tilde{d}_i(v)=\tilde{d}_{i-1}(v)-1$ if $v\in\{\tilde{x}_i, \tilde{y}_i\}$ and $\tilde{d}_i(v)=\tilde{d}_{i-1}(v)$ otherwise. Finally, we obtain $\tilde{\forest}_i$ by adding the edge $\tilde{x}_i\tilde{y}_i$ in $\tilde{\forest}_{i-1}$ and we set $\prueferinvers(\mathbf{w}):=\tilde{\forest}_{n-\ntrees}$. Next, we show that this algorithm is well--defined and that the output is indeed a graph. To that end, we note that for $v\in[n]\setminus[\ntrees]$ and $i\in[n-\ntrees]$ we have
\begin{align*}
	\big(\tilde{d}_{i-1}(v)\geq 1, \tilde{d}_i(v)=0\big)~ \implies~ \big(\tilde{y}_i=v\big).
\end{align*}
This yields that there are at least $(n-\ntrees-i)$ vertices $v\in[n]\setminus[\ntrees]$ with $\tilde{d}_i(v)\geq 1$. Thus, if $\tilde{d}_{i-1}(w_{n-\ntrees})\geq 1$ for some $i\in[n-\ntrees]$, then $\sum_{v\in [n]\setminus [\ntrees]}\tilde{d}_{i-1}(v)\leq 2(n-\ntrees-i)+1$ and therefore $\tilde{y}_i\in[n]\setminus[\ntrees]$. This yields $\tilde{d}_{i}(w_{n-\ntrees})\geq 1$ unless $i=n-\ntrees$. As $\tilde{d}_{0}(w_{n-\ntrees})\geq 1$ we obtain by induction that $\tilde{y}_i\in[n]\setminus[\ntrees]$ for all $i\in[n-\ntrees]$. In particular, this shows that $\tilde{y}_i$ is well--defined and $\tilde{x}_i\neq \tilde{y}_i$. Thus, the algorithm is always executable and $\tilde{\forest}_{i}$ is a graph for all $i\in[n-\ntrees]$.

In order to prove that $\prueferseqence:\forestclass(n,\ntrees)\to \sequences{n}{\ntrees}$ is a bijection, it suffices to show the following claims.
\begin{enumerate}[label=(\roman*)]
	\item \label{cl:pruefer1}
	$\prueferseqence(\forest)\in \sequences{n}{\ntrees}$ for all $\forest\in \forestclass(n, \ntrees)$;
	\item \label{cl:pruefer2} $\prueferinvers(\prueferseqence(\forest))=\forest$ for all $\forest\in \forestclass(n, \ntrees)$;
	\item \label{cl:pruefer3}
	$\prueferinvers(\mathbf{w})\in\forestclass\left(n, \ntrees\right)$ for all $\mathbf{w}\in \sequences{n}{\ntrees}$;
	\item \label{cl:pruefer4}
	$\prueferseqence\left(\prueferinvers\left(\mathbf{w}\right)\right)=\mathbf{w}$ for all $\mathbf{w}\in \sequences{n}{\ntrees}$.
\end{enumerate}
We observe that $x_{n-\ntrees}\notin \left\{y_1, \ldots, y_{n-\ntrees}\right\}$. Thus, using (\ref{eq:21}) yields $x_{n-\ntrees}\in[\ntrees]$, which implies \ref{cl:pruefer1}.

To show \ref{cl:pruefer2} we suppose that we first apply the algorithm to obtain $\prueferseqence(\forest)$ and then the algorithm $\prueferinvers$ with input $\mathbf{w}=\prueferseqence(\forest)$. Due to (\ref{eq:12}) the degree sequence of $\forest_0=\forest$ equals $\left(\tilde{d}_0(1), \ldots, \tilde{d}_0(n)\right)$ and therefore $\tilde{y}_1=y_1$. By construction we also have $\tilde{x}_1=x_1$, which implies that $\left(\tilde{d}_1(1), \ldots, \tilde{d}_1(n)\right)$ is the degree sequence of $\forest_1$. By repeating that argument we obtain by induction $\tilde{y}_i=y_i$ for all $i\in[n-\ntrees]$. As $\edgeSet{\forest}=\setbuilder{x_iy_i}{i\in[n-\ntrees]}$ and $\edgeSet{\tilde{\forest}_{n-\ntrees}}=\setbuilder{\tilde{x}_i\tilde{y}_i}{i\in[n-\ntrees]}$ this shows $\tilde{\forest}_{n-\ntrees}=\forest$, i.e. $\prueferinvers(\prueferseqence(\forest))=\forest$.

For \ref{cl:pruefer3} we assume that we apply the algorithm $\prueferinvers$ with input $\mathbf{w}\in \sequences{n}{\ntrees}$.  By induction it follows that for all $i\in\left\{0, \ldots, n-\ntrees\right\}$ each component of $\tilde{\forest}_i$ contains at most one vertex $v$ with $\tilde{d}_i(v)>0$. This implies that we never close a cycle when adding the edge $\tilde{x}_{i+1}\tilde{y}_{i+1}$ in $\tilde{\forest}_i$, which shows that $\prueferinvers(\mathbf{w})$ is a forest. We saw before that $\tilde{y}_i\in[n]\setminus[\ntrees]$ for all $i\in[n-\ntrees]$. Thus, if $r\in[\ntrees]$ is a root and the component of $\tilde{\forest}_i$ containing $r$ has a vertex $v$ with $\tilde{d}_i(v)>0$, then $v=r$. This implies that adding the edge $\tilde{x}_{i+1}\tilde{y}_{i+1}$ never connects two components of $\tilde{\forest}_i$ which contain both a root. Hence, $\prueferinvers(\mathbf{w})\in \forestclass(n,\ntrees)$. 

Finally for \ref{cl:pruefer4}, we suppose that for given $\mathbf{w}\in \sequences{n}{\ntrees}$ we first apply the algorithm to construct $\prueferinvers(\mathbf{w})$ and then the algorithm to obtain the Prüfer sequence of $\prueferinvers(\mathbf{w})$. We note that the degree sequence of $\forest_0=\prueferinvers(\mathbf{w})$ equals $\left(\tilde{d}_0(1), \ldots, \tilde{d}_0(n)\right)$ and therefore $y_1=\tilde{y}_1$. By construction $\tilde{x}_1$ is the unique neighbour of $\tilde{y}_1$ in $\forest_0$, which implies $x_1=\tilde{x}_1$. This yields that the degree sequence of $\forest_1$ is $\left(\tilde{d}_1(1), \ldots, \tilde{d}_1(n)\right)$. Repeating that argument we obtain by induction $\tilde{x}_i=x_i$ for all $i\in[n-\ntrees]$, which proves \ref{cl:pruefer4}.
\end{proof}

\subsection{Random forests with specified roots}\label{sub:random_forest}
In this section we consider a random forest $\forest=\forest(n,\ntrees)\ur \forestclass(n,\ntrees)$. In particular, we are interested in the degree sequence of $\forest$ and the maximum degree $\maxdegree{\forest}$. Instead of directly choosing $\forest$ from $\forestclass(n,\ntrees)$, we can equivalently create $\forest$ by Prüfer sequences from \Cref{sub:pruefer}: First we perform a balls--into--bins experiment with $n$ bins and $n-\ntrees-1$ balls and let $\location=\left(\locationBit_1, \ldots, \locationBit_{n-\ntrees-1}\right)\sim\binsandballs{n}{n-\ntrees-1}$ be the location vector. Then we independently choose $\locationBit_{n-\ntrees}\ur [\ntrees]$ and set $\forest=\prueferinvers\left(\locationBit_1, \ldots, \locationBit_{n-\ntrees}\right)$. We note that the degrees $\degree{v}{\forest}$ in $\forest$ are given by (\ref{eq:12}). Thus, we obtain the following.

\begin{thm}\label{thm:forest_balls_bins}
Let $n, \ntrees\in \N$ and $\degreesequence=\left(\degree{1}{\forest}, \ldots, \degree{n}{\forest}\right)$ be the degree sequence of $\forest=\forest\left(n, \ntrees\right)\ur \forestclass\left(n, \ntrees\right)$. Let $\location\sim \binsandballs{n}{n-\ntrees-1}$ and $\load_j=\load_j(\location)$ be the load in bin $\bin_j$ for each $j\in[n]$. In addition, let $\rootrv\ur [\ntrees]$ (independent of $\forest$) and for $j\in[\ntrees]$ we define $\rootvectorbit_j=1$ if $\rootrv=j$ and $\rootvectorbit_j=0$ otherwise. Then
\begin{align*}
\big(\load_1+\rootvectorbit_1, \ldots, \load_\ntrees+\rootvectorbit_\ntrees, \load_{\ntrees+1}+1, \ldots, \load_n+1\big)\sim \degreesequence.
\end{align*}
\end{thm}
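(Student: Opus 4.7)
The proof is a direct application of the Prüfer bijection of Theorem \ref{thm:pruefer}. My plan is as follows.

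First, I would observe that since $\prueferseqence:\forestclass(n,\ntrees)\to\sequences{n}{\ntrees}$ is a bijection (Theorem \ref{thm:pruefer}) and $\forest$ is chosen uniformly from $\forestclass(n,\ntrees)$, the random sequence $\prueferseqence(\forest)$ is distributed uniformly on $\sequences{n}{\ntrees}=[n]^{n-\ntrees-1}\times[\ntrees]$. A uniform element of this product set can be generated by sampling the first $n-\ntrees-1$ coordinates independently and uniformly from $[n]$ and the last coordinate independently and uniformly from $[\ntrees]$. In other words, if $\location=(\locationBit_1,\ldots,\locationBit_{n-\ntrees-1})\sim\binsandballs{n}{n-\ntrees-1}$ and $\rootrv\ur[\ntrees]$ is independent of $\location$, then $(\locationBit_1,\ldots,\locationBit_{n-\ntrees-1},\rootrv)$ is uniform on $\sequences{n}{\ntrees}$, and hence has the same distribution as $\prueferseqence(\forest)$.

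Next, I would use the identity (\ref{eq:12}), which relates each vertex degree in $\forest$ to the occurrence count in its Prüfer sequence. Under the coupling above, the occurrence count $\frequency{j}{(\locationBit_1,\ldots,\locationBit_{n-\ntrees-1})}$ is by definition the load $\load_j$ of bin $\bin_j$. It remains to account for the last entry $\rootrv$ of the sequence. For a root $v\in[\ntrees]$, (\ref{eq:12}) gives $\degree{v}{\forest}=\frequency{v}{\prueferseqence(\forest)}$, which under the coupling equals $\load_v+\mathbbm{1}[\rootrv=v]=\load_v+\rootvectorbit_v$. For a non-root $v\in[n]\setminus[\ntrees]$, (\ref{eq:12}) gives $\degree{v}{\forest}=\frequency{v}{\prueferseqence(\forest)}+1$, and because $\rootrv\in[\ntrees]$ the last coordinate never contributes to the count for such a $v$; thus $\frequency{v}{\prueferseqence(\forest)}=\load_v$ and $\degree{v}{\forest}=\load_v+1$.

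Assembling these coordinate-wise equalities in distribution yields
\begin{align*}
(\degree{1}{\forest},\ldots,\degree{n}{\forest})\sim\big(\load_1+\rootvectorbit_1,\ldots,\load_\ntrees+\rootvectorbit_\ntrees,\load_{\ntrees+1}+1,\ldots,\load_n+1\big),
\end{align*}
which is the desired conclusion. There is essentially no obstacle once Theorem \ref{thm:pruefer} is in hand; the only mild subtlety is the asymmetric treatment of the last Prüfer coordinate, which is exactly why the roots contribute $\rootvectorbit_j$ instead of $1$.
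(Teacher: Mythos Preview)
Your proposal is correct and follows exactly the approach the paper takes: the paper's proof is a one-line appeal to Theorem~\ref{thm:pruefer} together with the observation (stated just before the theorem) that sampling $\forest$ uniformly is equivalent to sampling $(\locationBit_1,\ldots,\locationBit_{n-\ntrees-1},\rootrv)$ uniformly from $\sequences{n}{\ntrees}$ and applying $\prueferinvers$. You have simply spelled out the details of that one-liner, including the coordinate-wise translation of occurrence counts via (\ref{eq:12}).
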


\begin{proof}
The statement follows by combining \Cref{thm:pruefer} and the construction of $\forest$ via Prüfer sequences as described above.
\end{proof}

Using this connection to the balls--into--bins model we obtain an upper bound on $\maxdegree{\forest(n,\ntrees)}$ (see \Cref{thm:forest_max_degree}\ref{thm:forest_max_degree1}). If we assume that $\ntrees$ is not too \lq large\rq, we can even show that \whp\ $\maxdegree{\forest(n,\ntrees)}$ is concentrated at two values and that the maximum degree of a root vertex, i.e. a vertex in $[\ntrees]$, is much smaller than $\maxdegree{\forest(n,\ntrees)}$ (see \Cref{thm:forest_max_degree}\ref{thm:forest_max_degree2}). We will need these facts later when we use random forests to build a random complex part (see \Cref{sub:random_complex_part}).
\begin{thm}\label{thm:forest_max_degree}
Let $\ntrees=\ntrees(n)$, $\forest=\forest\left(n, \ntrees\right)\ur \forestclass\left(n, \ntrees\right)$, and $\varepsilon>0$. Then 
\begin{enumerate}
\item\label{thm:forest_max_degree1}
\whp\
$\maxdegree{\forest}\lessorequal \rounddown{\specialconcentration{n}}+2$;
\item\label{thm:forest_max_degree2}
if $\ntrees=\smallo{n^{1-\delta}}$ for some $\delta>0$, then \whp\ $\rounddown{\specialconcentration{\nbins}-\varepsilon}+1\lessorequal \maxdegree{\forest}\lessorequal \rounddown{\specialconcentration{\nbins}+\varepsilon}+1$ and $\maxdegree{\forest}-\max\setbuilder{d_\forest(r)}{r \in [\ntrees]}=\smallomega{1}$.
\end{enumerate}
\end{thm}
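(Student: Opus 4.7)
The plan is to translate both statements into facts about the balls--into--bins model via \Cref{thm:forest_balls_bins} and then invoke the concentration results \Cref{thm:concentration_balls_bins} and \Cref{lem:max_load_subset}. Write $k := n-\ntrees-1$ and let $\location \sim \binsandballs{n}{k}$ with loads $\load_1, \ldots, \load_n$, maximum load $\maxload$, and (independent) root--indicator vector $(\rootvectorbit_1, \ldots, \rootvectorbit_\ntrees)$ as in \Cref{thm:forest_balls_bins}. By that theorem the degree sequence of $\forest$ is distributed like
\begin{align*}
(\load_1 + \rootvectorbit_1, \ldots, \load_\ntrees + \rootvectorbit_\ntrees, \load_{\ntrees+1} + 1, \ldots, \load_n + 1),
\end{align*}
so every degree is at most $\maxload + 1$, and each non--root degree equals $\load_j + 1$. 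The degenerate cases $\ntrees \in \{n-1, n\}$ are trivial as $\forest$ then has no edges and $\specialconcentration{n} > 1$ by \Cref{lem:nu}\ref{lem:nu5}.

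For part~\ref{thm:forest_max_degree1}, \Cref{thm:concentration_balls_bins} (applied with $\varepsilon' = 1/2$ and using $k = \bigo{n}$) yields \whp\ $\maxload \leq \rounddown{\concentration{n}{k} + 1/2}$. Monotonicity in the second argument (\Cref{lem:nu}\ref{lem:nu4}) gives $\concentration{n}{k} \leq \concentration{n}{n} = \specialconcentration{n}$, so \whp\ $\maxload \leq \rounddown{\specialconcentration{n}} + 1$ and $\maxdegree{\forest} \leq \maxload + 1 \leq \rounddown{\specialconcentration{n}} + 2$.

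For part~\ref{thm:forest_max_degree2}, the assumption $\ntrees = \smallo{n^{1-\delta}}$ gives $k = \Theta(n)$ and $n - k = \ntrees + 1 = \smallo{n(\log\log n)^2/\log n}$, so \Cref{lem:nu}\ref{lem:nu3} yields $\concentration{n}{k} = \specialconcentration{n} + \smallo{1}$. Applying \Cref{thm:concentration_balls_bins} with $\varepsilon/2$ and absorbing the $\smallo{1}$ shift therefore gives \whp\
\begin{align*}
\rounddown{\specialconcentration{n} - \varepsilon} \lessorequal \maxload \lessorequal \rounddown{\specialconcentration{n} + \varepsilon},
\end{align*}
from which $\maxdegree{\forest} \leq \maxload + 1 \leq \rounddown{\specialconcentration{n} + \varepsilon} + 1$ follows immediately. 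For the matching lower bound I apply \Cref{lem:max_load_subset} (whose hypotheses $k = \Theta(n)$ and $\ntrees = \smallo{n^{1-\delta}}$ are both in force): \whp\ $\maxload - \maxload_{\ntrees} = \smallomega{1}$, so the maximum load is attained in some bin $\bin_j$ with $j > \ntrees$, i.e.\ at a non--root vertex, which therefore has degree exactly $\maxload + 1 \geq \rounddown{\specialconcentration{n} - \varepsilon} + 1$. For any root $r \in [\ntrees]$ we have $\degree{r}{\forest} = \load_r + \rootvectorbit_r \leq \maxload_{\ntrees} + 1$, hence
\begin{align*}
\maxdegree{\forest} - \max_{r \in [\ntrees]} \degree{r}{\forest} \greaterorequal (\maxload + 1) - (\maxload_{\ntrees} + 1) \equal \smallomega{1}.
\end{align*}

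The only delicate point is the bookkeeping of $\varepsilon$ when passing from $\concentration{n}{k}$ to $\specialconcentration{n}$: the shift is merely $\smallo{1}$, so applying the balls--into--bins concentration with a smaller tolerance $\varepsilon/2$ absorbs it and reproduces the stated two--value window. Everything else is a direct assembly of previously established facts, with \Cref{lem:max_load_subset} providing the crucial separation between root and non--root loads that pins the maximum degree down to the non--root side.
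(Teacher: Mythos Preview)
Your proof is correct and follows essentially the same route as the paper: translate degrees to loads via \Cref{thm:forest_balls_bins}, bound $\maxload$ using \Cref{thm:concentration_balls_bins} and monotonicity (\Cref{lem:nu}\ref{lem:nu4}) for part~\ref{thm:forest_max_degree1}, and for part~\ref{thm:forest_max_degree2} combine \Cref{thm:concentration_balls_bins} with \Cref{lem:nu}\ref{lem:nu3} and \Cref{lem:max_load_subset} to pin down $\maxdegree{\forest}=\maxload+1$ and separate root from non--root degrees. Your treatment is in fact a touch more careful than the paper's, since you explicitly dispose of the degenerate cases $\ntrees\in\{n-1,n\}$ where $k\le 0$ and the balls--into--bins model is not literally applicable.
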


\begin{proof}
Let $\location\sim \binsandballs{n}{n-\ntrees-1}$, $\maxload=\maxload(\location)$ be the maximum load of $\location$, and $\maxload_{t}=\maxload_{t}(\location)$ be the maximum load of one of the first $t$ bins of $\location$. Due to \Cref{thm:concentration_balls_bins} we have \whp\ $\maxload\leq \rounddown{\concentration{n}{n-\ntrees-1}}+1\leq\rounddown{\specialconcentration{n}}+1$, where we used in the last inequality \Cref{lem:nu}\ref{lem:nu4}. Combining it with \Cref{thm:forest_balls_bins} we have 
\begin{align*}
\probLarge{\maxdegree{\forest}> \rounddown{\specialconcentration{n}}+2}\lessorequal \prob{\maxload >\rounddown{\specialconcentration{n}}+1}=\smallo{1}.
\end{align*}
This shows statement \ref{thm:forest_max_degree1}.

By \Cref{lem:max_load_subset} we have \whp\ $\maxload-\maxload_{t}=\smallomega{1}$. This together with \Cref{thm:forest_balls_bins} implies \whp\  $\maxdegree{\forest}-\max\setbuilder{d_\forest(r)}{r \in [\ntrees]}=\smallomega{1}$ and $\contiguous{\maxdegree{\forest}}{\maxload+1}$. Thus, we obtain by \Cref{thm:concentration_balls_bins} that \whp
\begin{align*}
	\rounddown{\concentration{n}{n-\ntrees-1}-\varepsilon/2}+1\lessorequal \maxdegree{\forest}\lessorequal \rounddown{\concentration{n}{n-\ntrees-1}+\varepsilon/2}+1.
\end{align*}
By \Cref{lem:nu}\ref{lem:nu3} we have $\concentration{n}{n-\ntrees-1}=\specialconcentration{n}+\smallo{1}$, which shows statement \ref{thm:forest_max_degree2}.
\end{proof}

We note that the special case of random trees, i.e. when $\ntrees=1$, was studied in \cite{moon,CGS}. In particular, Carr, Goh, and Schmutz \cite{CGS} used the saddle--point method to show that \whp\ the maximum degree in random trees is concentrated at two values.

%
%
%

\subsection{Random complex part}\label{sub:random_complex_part}
In this section we consider the class $\complexclass\left(C,q\right)$ consisting of complex graphs with core $C$ and vertex set $[q]$, where $C$ is a given core and $q\in \N$ (cf. \Cref{def:random_complex_part}). Provided that $\maxdegree{C}$ is bounded and $\numberVertices{C}$ is \lq small\rq\ compared to $q$, we can use \Cref{thm:forest_max_degree} to show that the maximum degree of $\complexgraph\left(C,q\right)\ur \complexclass\left(C,q\right)$ is strongly concentrated.

\begin{thm}\label{thm:random_complex_part}
For each $n\in \N$, let $C=C(n)$ be a core and $q=q(n)\in \N$. In addition, let $\complexgraph=\complexgraph\left(C,q\right)\ur \complexclass\left(C,q\right)$ be a random complex part with core $C$ and vertex set $[q]$ as in \Cref{def:random_complex_part} and $\varepsilon>0$. If $\maxdegree{C}=\Th{1}$, then the following hold.
\begin{enumerate}
\item\label{thm:random_complex_part1}
\Whp\ $\maxdegree{Q}\lessorequal \specialconcentration{q}+\bigo{1}$.
\item\label{thm:random_complex_part2}
If in addition $\numberVertices{C}=\smallo{q^{1-\delta}}$ for some $\delta>0$, then \whp\ $\rounddown{\specialconcentration{q}-\varepsilon}+1\lessorequal \maxdegree{Q}\lessorequal \rounddown{\specialconcentration{q}+\varepsilon}+1$.
\end{enumerate}
\end{thm}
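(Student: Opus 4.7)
\medskip
\noindent\textbf{Proof plan.} The plan is to realise $\complexgraph=\complexgraph(C,q)$ via the forest construction sketched in \Cref{sub:strategy_complex_part}: draw $\forest=\forest(q,\numberVertices{C})\ur \forestclass(q,\numberVertices{C})$ and attach to each $v\in\vertexSet{C}$ the tree of $\forest$ rooted at the root corresponding to $v$. Since every complex graph with prescribed core $C$ decomposes uniquely into $C$ together with a forest pending at its core vertices, this procedure samples uniformly from $\complexclass(C,q)$ and yields the degree identity
\begin{equation*}
	\degree{v}{\complexgraph}\equal
	\begin{cases}
		\degree{v}{C}+\degree{v}{\forest} & \text{if }v\in\vertexSet{C}\text{ (a root),}\\
		\degree{v}{\forest} & \text{otherwise.}
	\end{cases}
\end{equation*}

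For part~\ref{thm:random_complex_part1}, the identity above gives $\maxdegree{\complexgraph}\lessorequal \maxdegree{C}+\maxdegree{\forest}$. Combining the hypothesis $\maxdegree{C}=\Th{1}$ with \Cref{thm:forest_max_degree}\ref{thm:forest_max_degree1}, which says whp $\maxdegree{\forest}\lessorequal \rounddown{\specialconcentration{q}}+2$, one immediately obtains whp $\maxdegree{\complexgraph}\lessorequal \specialconcentration{q}+\bigo{1}$, as desired.

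For part~\ref{thm:random_complex_part2}, the additional hypothesis $\numberVertices{C}=\smallo{q^{1-\delta}}$ allows an appeal to \Cref{thm:forest_max_degree}\ref{thm:forest_max_degree2}. It supplies two facts whp: first, $\rounddown{\specialconcentration{q}-\varepsilon}+1\lessorequal \maxdegree{\forest}\lessorequal \rounddown{\specialconcentration{q}+\varepsilon}+1$; second, the \emph{root slackness} $\maxdegree{\forest}-\max_{r\in[\numberVertices{C}]}\degree{r}{\forest}=\smallomega{1}$. I will then argue whp $\maxdegree{\complexgraph}=\maxdegree{\forest}$ by splitting the vertices into roots and non--roots. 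For a root $r$, the degree identity and the root slackness give
\begin{equation*}
\degree{r}{\complexgraph}\lessorequal \maxdegree{C}+\max_{r'}\degree{r'}{\forest}\lessorequal \bigo{1}+\maxdegree{\forest}-\smallomega{1}\hspace{0.06cm}<\hspace{0.06cm}\maxdegree{\forest}
\end{equation*}
whp. For a non--root $v$ we have $\degree{v}{\complexgraph}=\degree{v}{\forest}\lessorequal \maxdegree{\forest}$, and the root slackness guarantees that the maximum of $\forest$ is actually attained at some non--root, so the upper bound is realised. Combining both cases gives whp $\maxdegree{\complexgraph}=\maxdegree{\forest}$, and the announced two--value concentration of $\maxdegree{\complexgraph}$ follows directly from that of $\maxdegree{\forest}$.

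The main obstacle is essentially one of bookkeeping: setting up the bijection between $\complexclass(C,q)$ and forests in $\forestclass(q,\numberVertices{C})$ precisely enough that the root identification with $\vertexSet{C}$ is canonical and the degree identity above is unambiguous. Once it is in place, absorbing the $\bigo{1}$ root contribution coming from $C$ into the $\smallomega{1}$ gap supplied by \Cref{thm:forest_max_degree}\ref{thm:forest_max_degree2} is essentially automatic, and no delicate analytic estimate beyond the forest theorem is needed.
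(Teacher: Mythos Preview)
Your proposal is correct and follows essentially the same approach as the paper: both realise $\complexgraph(C,q)$ via a random forest $\forest(q,\numberVertices{C})$, establish the same degree identity, bound $\maxdegree{\complexgraph}\le\maxdegree{C}+\maxdegree{\forest}$ for part~\ref{thm:random_complex_part1}, and use the root--slackness clause of \Cref{thm:forest_max_degree}\ref{thm:forest_max_degree2} together with $\maxdegree{C}=\Th{1}$ to conclude whp $\maxdegree{\complexgraph}=\maxdegree{\forest}$ for part~\ref{thm:random_complex_part2}. Your write-up merely spells out the $\maxdegree{\complexgraph}=\maxdegree{\forest}$ step a little more explicitly than the paper does.
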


\begin{proof}
We observe that $Q$ can be obtained by choosing a random forest $\forest=\forest(q,\numberVertices{C})\ur\forestclass(q,\numberVertices{C})$ and then replacing each vertex $r$ in $C$ by the tree of $\forest$ with root $r$. For a vertex $v\in \vertexSet{Q}$ we have
\begin{align}\label{eq:3}
	\degree{v}{Q}=
	\begin{cases}
		\degree{v}{C}+\degree{v}{F}& \text{if}~~ v \in \vertexSet{C} \\
		\degree{v}{F} & \text{ otherwise}.
	\end{cases}
\end{align}
Hence, we have $\maxdegree{Q}\lessorequal \maxdegree{C}+\maxdegree{\forest}$. By \Cref{thm:forest_max_degree}\ref{thm:forest_max_degree1} we get \whp\ $\maxdegree{\forest}\lessorequal \specialconcentration{q}+2$. Together with the fact $\maxdegree{C}=\Th{1}$ this yields statement \ref{thm:random_complex_part1}. For \ref{thm:random_complex_part2} we apply \Cref{thm:forest_max_degree}\ref{thm:forest_max_degree2} to $\forest$. Together with (\ref{eq:3}) and $\maxdegree{C}=\Th{1}$ this implies \whp\ $\maxdegree{Q}=\maxdegree{\forest}$. Thus, statement \ref{thm:random_complex_part2} follows by applying again \Cref{thm:forest_max_degree}\ref{thm:forest_max_degree2}.
\end{proof}

\section{Proofs of main results}\label{sec:proof}
Throughout this section, let $\planargraph=\planargraph(n,m)\ur \planarclass(n,m)$ be the random planar graph.
\proofof{thm:main_sub}
In \Cref{thm:non_complex} we have seen that $\liminf_{n \to \infty} \prob{G(n,m)\text{ is planar}}>0$. Thus, each graph property that holds \whp\ in $G(n,m)$ is also true \whp\ in $\planargraph$ and the first statement follows by \Cref{thm:G_n_m_bins_balls}\ref{thm:G_n_m_bins_balls2}. By taking $\varepsilon=1/3$ we get the \lq in particular\rq\ statement. \qed

\proofof{thm:main_1sup}
We split $\planargraph$ into the large complex part $\Complexlargestcore=\complexlargestcore{\planargraph}$, the small complex part $\Complexrestcore=\complexrestcore{\planargraph}$, and the non--complex part $\Restcomplex=\restcomplex{\planargraph}$ as described in \Cref{sub:decomposition}. We claim that \whp\ the following hold.
\begin{enumerate}[label=(\roman*)]
\item\label{cl:1}
$\rounddown{\specialconcentration{s}-\varepsilon}+1\lessorequal\maxdegree{\Complexlargestcore}\lessorequal\rounddown{\specialconcentration{s}+\varepsilon}+1$;
\item\label{cl:2}
$\maxdegree{\Complexrestcore}\lessorequal\specialconcentration{n^{2/3}}+\bigo{1}$;
\item\label{cl:3}
$\rounddown{\specialconcentration{n}-\varepsilon}\lessorequal\maxdegree{\Restcomplex}\lessorequal\rounddown{\specialconcentration{n}+\varepsilon}$.
\end{enumerate}
Assuming these three claims are true we can finish the proof as follows. By \Cref{thm:internal_structure} we have \whp\ $\Largestcomponent=\Complexlargestcore$ and therefore also \whp\ $\Rest=\Complexrestcore\cup \Restcomplex$. Thus, the
statement \ref{thm:main_1sup1} of \Cref{thm:main_1sup} follows by \ref{cl:1}. By \Cref{lem:nu}\ref{lem:nu1} we have $\specialconcentration{n^{2/3}}=\left(2/3+\smallo{1}\right)\log n/\log\log n$ and $\specialconcentration{n}=\left(1+\smallo{1}\right)\log n/\log\log n$. Combining that with \ref{cl:2} and \ref{cl:3} yields \whp\ $\maxdegree{\Complexrestcore\cup \Restcomplex}=\maxdegree{\Restcomplex}$ and therefore also \whp\ $\maxdegree{\Rest}=\maxdegree{\Restcomplex}$. Hence, the statement \ref{thm:main_1sup2} of \Cref{thm:main_1sup} follows by \ref{cl:3}. Finally, we obtain the \lq in particular\rq\ statement by taking $\varepsilon=1/3$.

To prove the claims, we will follow the strategy described in \Cref{sub:strategy_decomposition}: We will construct a conditional random graph $\condGraph{\randomGraph}{\seq}$ which is distributed like the random graph $\tilde{\planargraph}$ introduced in \Cref{sub:strategy_decomposition}. Then we will determine the maximum degrees of the large complex part, small complex part and non--complex part of $\condGraph{\randomGraph}{\seq}$ (or equivalently of $\tilde{\planargraph}$). Finally, we will apply \Cref{lem:conditional_random_graphs} to translate these results to the random planar graph $\planargraph$. 

Let $\cl(n)$ be the subclass of $\planarclass(n,m)$ consisting of those graphs $H$ satisfying
\begin{align}
\maxdegreeLarge{\core{H}}&\equal3, \label{eq:7}\\
\numberVerticesLarge{\largestcomponent{\core{H}}}&\equal\Th{sn^{-1/3}},\label{eq:8}\\
\numberVerticesLarge{\complexlargestcore{H}}&\equal\left(2+\smallo{1}\right)s,\label{eq:9}\\
\numberVerticesLarge{\complexrestcore{H}}&\equal\bigo{n^{2/3}},\label{eq:10}\\
\numberEdgesLarge{\restcomplex{H}}&\equal\numberVerticesLarge{\restcomplex{H}}/2+\bigo{\numberVerticesLarge{\restcomplex{H}}^{2/3}}.\label{eq:11}
\end{align}
Due to \Cref{thm:internal_structure} we can choose the implicit hidden constants in the equations (\ref{eq:7})--(\ref{eq:11}) such that $\planargraph\in\cl(n)$ with a probability of at least $1-\gamma/2$, for arbitrary $\gamma>0$. We will apply \Cref{lem:conditional_random_graphs} to the class $\cl:=\bigcup_{n\in\N}\cl(n)$. To that end, we define the function $\func$ such that for $H\in\cl$ we have
\begin{align*}
\func(H):=\big(\core{H}, \numberVertices{\complexlargestcore{H}}, \numberVertices{\complexrestcore{H}}\big).
\end{align*}
Let $\seq=\left(C_n, l_n, r_n\right)_{n\in\N}$ be a sequence that is feasible for $(\cl, \func)$ and let $\randomGraph=\randomGraph(n)\ur \cl(n)$. 
By definition the possible realisations of $\condGraph{\randomGraph}{\seq}$ are those graphs $H \in \cl$ with  $\core{H}=C_n$, $\numberVertices{\complexlargestcore{H}}=l_n$, and $\numberVertices{\complexrestcore{H}}=r_n$. Hence, $\condGraph{\randomGraph}{\seq}=\complexlargestcore{\condGraph{\randomGraph}{\seq}}~\dot\cup~ \complexrestcore{\condGraph{\randomGraph}{\seq}}~\dot\cup~ \restcomplex{\condGraph{\randomGraph}{\seq}}$ can be constructed as follows. For $\complexlargestcore{\condGraph{\randomGraph}{\seq}}$ we choose uniformly at random a complex graph with $l_n$ vertices and core $\largestcomponent{C_n}$ and for $\complexrestcore{\condGraph{\randomGraph}{\seq}}$ a complex graph with $r_n$ vertices and core $\rest{C_n}$. For $\restcomplex{\condGraph{\randomGraph}{\seq}}$ we choose a graph without complex components having $u_n:=n-l_n-r_n$ vertices and $w_n:=m-\numberEdges{C_n}+\numberVertices{C_n}-l_n-r_n$ edges. Summing up, we have
\begin{align}
\complexlargestcoreLarge{\condGraph{\randomGraph}{\seq}} &\sim \complexgraph\big(\largestcomponent{C_n}, l_n\big);\label{eq:14}\\
\complexrestcoreLarge{\condGraph{\randomGraph}{\seq}} &\sim \complexgraph\big(\rest{C_n}, r_n\big);\label{eq:15}\\
\restcomplexLarge{\condGraph{\randomGraph}{\seq}}&\sim \nocomplex\big(u_n,w_n\big),\label{eq:16}
\end{align}
where the random complex parts and the random graph without complex components on the right hand side are as defined in \Cref{def:random_complex_part,def:nocomplex}, respectively.
Due to (\ref{eq:7})--(\ref{eq:9}) we have $\maxdegree{C_n}=3$, $\numberVerticesLarge{\largestcomponent{C_n}}=\Th{sn^{-1/3}}$, and $l_n=\left(2+\smallo{1}\right)s$. Hence, we can apply \Cref{thm:random_complex_part}\ref{thm:random_complex_part2} to $\complexgraph\big(\largestcomponent{C_n}, l_n\big)$. Together with (\ref{eq:14}) this implies \whp
\begin{align}\label{eq:4}
	\rounddown{\specialconcentration{l_n}-\varepsilon/2}+1\lessorequal \maxdegreeLarge{\complexlargestcore{\condGraph{\randomGraph}{\seq}}}\lessorequal 	\rounddown{\specialconcentration{l_n}+\varepsilon/2}+1.
\end{align}
By \Cref{lem:nu}\ref{lem:nu2} we have $\specialconcentration{l_n}=\specialconcentration{s}+\smallo{1}$, since $l_n=\left(2+\smallo{1}\right)s$. Together with (\ref{eq:4}) this shows \whp
\begin{align}\label{eq:5}
	\rounddown{\specialconcentration{s}-\varepsilon}+1\lessorequal \maxdegreeLarge{\complexlargestcore{\condGraph{\randomGraph}{\seq}}}\lessorequal 	\rounddown{\specialconcentration{s}+\varepsilon}+1.
\end{align}
By \Cref{lem:conditional_random_graphs}, inequality (\ref{eq:5}) is also \whp\ true if we replace $\condGraph{\randomGraph}{\seq}$ by $\randomGraph$. Combining that with the fact that $\planargraph \in \cl$ with probability at least $1-\gamma/2$ we obtain that with probability at least $1-\gamma$
\begin{align*}
	\rounddown{\specialconcentration{s}-\varepsilon}+1\lessorequal \maxdegree{\Complexlargestcore}\lessorequal 	\rounddown{\specialconcentration{s}+\varepsilon}+1
\end{align*}
for all $n$ large enough. As $\gamma$ was arbitrary, this shows claim \ref{cl:1}. 

Next, we prove claims \ref{cl:2} and \ref{cl:3} in a similar fashion. Combining (\ref{eq:15}) and \Cref{thm:random_complex_part}\ref{thm:random_complex_part1} yields  $\maxdegreeLarge{\complexrestcore{\condGraph{\randomGraph}{\seq}}}\leq \specialconcentration{r_n}+\bigo{1}$. Due to \Cref{lem:nu}\ref{lem:nu9} and \ref{lem:nu2} we have $\specialconcentration{r_n}\leq\specialconcentration{n^{2/3}}+o(1)$, where we used $r_n=\bigo{n^{2/3}}$ by (\ref{eq:10}). This yields $\maxdegreeLarge{\complexrestcore{\condGraph{\randomGraph}{\seq}}}\leq\specialconcentration{n^{2/3}}+\bigo{1}$. Thus, claim \ref{cl:2} follows by \Cref{lem:conditional_random_graphs}. 
Due to (\ref{eq:11}) we have $w_n=u_n/2+\bigo{u_n^{2/3}}$. Hence, we can combine (\ref{eq:16}) and \Cref{lem:random_non_complex} to obtain \whp
\begin{align}\label{eq:6}
\rounddown{\specialconcentration{u_n}-\varepsilon/2}\lessorequal \maxdegreeLarge{\restcomplex{\condGraph{\randomGraph}{\seq}}}\lessorequal \rounddown{\specialconcentration{u_n}+\varepsilon/2}.
\end{align}
Due to (\ref{eq:9}) and (\ref{eq:10}) we have $u_n=n-l_n-r_n=\left(1-\smallo{1}\right)n$ and therefore we obtain  $\specialconcentration{u_n}=\specialconcentration{n}+\smallo{1}$ by \Cref{lem:nu}\ref{lem:nu2}. Using that in (\ref{eq:6}) we get \whp
 \begin{align*}
 \rounddown{\specialconcentration{n}-\varepsilon}\lessorequal \maxdegreeLarge{\restcomplex{\condGraph{\randomGraph}{\seq}}}\lessorequal \rounddown{\specialconcentration{n}+\varepsilon}.
 \end{align*}
 Now claim \ref{cl:3} follows by \Cref{lem:conditional_random_graphs}.  \qed
\proofof{thm:main_int}
The statement follows analogously to the proof of \Cref{thm:main_1sup}. \qed

\proofof{cor:maxdegree}
We distinguish three cases according to which region the number of edges $m$ falls into. If $m$ is as in \Cref{thm:main_sub}, then by \Cref{lem:nu}\ref{lem:nu1} \whp\ $\maxdegree{\planargraph}=\concentration{n}{2m}+\bigo{1}=\left(1+\smallo{1}\right)\log n/\log\log n$, where we used that $m=\Th{n}$. Similarly, if $m$ is as in \Cref{thm:main_1sup}, then due to \Cref{lem:nu} \ref{lem:nu1} and \ref{lem:nu9} we have \whp\ $\maxdegree{\planargraph}=\specialconcentration{n}+\bigo{1}=\left(1+\smallo{1}\right)\log n/\log\log n$. Finally, if $m$ is as in \Cref{thm:main_int}, then by \Cref{lem:nu}\ref{lem:nu1} \whp\ $\maxdegree{\planargraph}=\specialconcentration{n}+\bigo{1}=\left(1+\smallo{1}\right)\log n/\log\log n$.
\qed

\section{Discussion}\label{sec:discussion}
The only properties about $\planargraph(n,m)$ which we used in our proofs are the results on the internal structure in \Cref{thm:internal_structure}. Kang, Moßhammer, and Sprüssel \cite{surface} showed that \Cref{thm:internal_structure} is true for much more general classes of graphs. Prominent examples of such classes are cactus graphs, series--parallel graphs, and graphs embeddable on an orientable surface of genus $g\in \N\cup \{0\}$ (see \cite[Section 4]{cycles}). Using the generalised version of \Cref{thm:internal_structure} and analogous proofs of \Cref{thm:main_sub,thm:main_1sup,thm:main_int} and \Cref{cor:maxdegree}, one can show the following.
\begin{thm}
\Cref{thm:main_sub,thm:main_1sup,thm:main_int} and \Cref{cor:maxdegree} are true for the class of cactus graphs, the class of series--parallel graphs, and the class of graphs embeddable on an orientable surface of genus $g\in \N \cup \{0\}$.
\end{thm}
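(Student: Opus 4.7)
The plan is to imitate, nearly verbatim, the proofs of \Cref{thm:main_sub,thm:main_1sup,thm:main_int} and \Cref{cor:maxdegree}, replacing the appeal to \Cref{thm:internal_structure} by its generalisation from \cite{surface,cycles}. The key observation motivating the statement is that \Cref{thm:internal_structure} is the \emph{only} place in the whole argument where planarity was used; every other ingredient (the balls--into--bins concentration \Cref{thm:concentration_balls_bins}, the Prüfer bijection \Cref{thm:pruefer}, the forest--to--bins translation \Cref{thm:forest_max_degree}, the random complex part analysis \Cref{thm:random_complex_part}, and the contiguity \Cref{thm:G_n_m_bins_balls,thm:non_complex,lem:random_non_complex}) is completely class--free: it either concerns the balls--into--bins model, or the \ER\ random graph $G(n,m)$ in the sparse regime, or the uniformly random complex graph with prescribed core. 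None of these objects sees the planar constraint, so each of them remains valid if we pick the random graph from any of the classes mentioned in the theorem.

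Concretely, I would proceed as follows. First I would state the generalised internal--structure theorem: for $\cl \in \{\textrm{cactus},\textrm{series--parallel},\textrm{genus } g\}$ and $\randomGraph = \randomGraph(n,m) \ur \cl(n,m)$ in the regimes considered in \Cref{thm:main_sub,thm:main_1sup,thm:main_int}, the quantities $\maxdegree{\core{\randomGraph}}$, $\numberVertices{\largestcomponent{\core{\randomGraph}}}$, $\numberVertices{\complexlargestcore{\randomGraph}}$, $\numberVertices{\complexrestcore{\randomGraph}}$, and the edge count $\numberEdges{\restcomplex{\randomGraph}}$ lie in exactly the ranges displayed in \Cref{thm:internal_structure}, and \whp\ $\complexlargestcore{\randomGraph}=\largestcomponent{\randomGraph}$. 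This is \cite[Theorems 5.1 and 5.4]{surface} applied to the additive and bridge--addable classes treated in \cite[Section~4]{cycles}, and the asymptotics are insensitive to which specific such class one picks.

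Given that, the conditional--random--graph argument proceeds mechanically. I define $\cl(n) \subseteq \cl(n,m)$ as the subclass satisfying the structural constraints \eqref{eq:7}--\eqref{eq:11} (with the implicit constants chosen so that $\randomGraph \in \cl(n)$ with probability at least $1-\gamma/2$) and apply \Cref{lem:conditional_random_graphs} with the function $\func(H) = (\core{H}, \numberVertices{\complexlargestcore{H}}, \numberVertices{\complexrestcore{H}})$. The crucial structural fact is that, conditionally on $\func$, the three pieces $\complexlargestcore{\condGraph{\randomGraph}{\seq}}$, $\complexrestcore{\condGraph{\randomGraph}{\seq}}$, $\restcomplex{\condGraph{\randomGraph}{\seq}}$ still decompose as in \eqref{eq:14}--\eqref{eq:16}, i.e.\ as uniformly random complex parts with prescribed cores and as a uniformly random graph without complex components. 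This is where one must check that each of cactus, series--parallel, and surface classes is \emph{closed under the relevant construction}: any complex graph with a given 2--core lying in $\cl$ is again in $\cl$, and replacing the non--complex part by any other non--complex graph with the same vertex and edge counts preserves membership in $\cl$. For cactus and series--parallel this is immediate from the forbidden--minor or forbidden--topological--minor characterisation; for the genus--$g$ case it follows from Euler's formula and the fact that attaching trees and unicyclic components does not change the genus. Once this closure is verified, \eqref{eq:14}--\eqref{eq:16} transfer verbatim, and the rest of the proofs of \Cref{thm:main_sub,thm:main_1sup,thm:main_int} apply word for word, yielding \Cref{cor:maxdegree} by the same three--case combination.

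The main (and only non--routine) obstacle is precisely this closure check for the three classes, together with locating the exact generalisation of \Cref{thm:internal_structure} in \cite{surface,cycles} and verifying that the stated ranges of $\maxdegree{\core{\cdot}}$, $\numberVertices{\largestcomponent{\core{\cdot}}}$, $\numberVertices{\complexlargestcore{\cdot}}$, $\numberVertices{\complexrestcore{\cdot}}$, and $\numberEdges{\restcomplex{\cdot}}$ are genuinely the same (up to the hidden constants, which do not enter the final concentration statement). Everything downstream is verbatim substitution, so no new probabilistic or analytic ideas are required.
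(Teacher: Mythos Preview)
Your proposal is correct and matches the paper's approach exactly: the paper's entire argument for this theorem is the one--sentence observation that \Cref{thm:internal_structure} is the only place planarity enters, together with the remark that \cite{surface} proves the same internal--structure statement for all the classes listed (see \cite[Section~4]{cycles}), so the proofs of \Cref{thm:main_sub,thm:main_1sup,thm:main_int} and \Cref{cor:maxdegree} go through verbatim. Your additional discussion of the closure properties needed for \eqref{eq:14}--\eqref{eq:16} to hold is more explicit than what the paper writes, but it is exactly the content implicitly invoked when the paper appeals to the framework of \cite{surface,cycles}.
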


\bibliographystyle{plain}
\bibliography{kang-missethan-max-degree}

\appendix
\section{Properties of $\concentration{\nbins}{\nballs}$}
In this section we consider the function $\concentration{\nbins}{\nballs}$ defined in \Cref{def:nu}. In \Cref{sub:well_definedness} we will show that $\concentration{\nbins}{\nballs}$ is well--defined and then in \Cref{sub:proof_nu} we will provide a proof of \Cref{lem:nu}.
\subsection{Well--definedness of $\concentration{\nbins}{\nballs}$}\label{sub:well_definedness}
We recall that for given $\nbins,\nballs\in \N$ we defined the function $f$ as
\begin{align}\label{eq:22}
f(x)=f_{\nbins,\nballs}(x):=x\log\nballs+x-\left(x+1/2\right)\log x-(x-1)\log \nbins. 
\end{align}
By basic calculus we obtain $f(x)>0$ for all $x\in (0,1]$, $f''(x)<0$ for all $x\geq 1$, and $f(x)\to -\infty$ as $x\to \infty$. This implies that $f$ has a unique zero in $(0, \infty)$, which shows that $\concentration{\nbins}{\nballs}$ is well--defined. Moreover, we obtain the following fact, which we will use in \Cref{sub:proof_nu}:
\begin{align}\label{eq:24}
f(x)
\begin{cases}
>0 &\text{if } x<\concentration{\nbins}{\nballs},\\
=0 &\text{if } x=\concentration{\nbins}{\nballs},\\
<0 &\text{if } x>\concentration{\nbins}{\nballs}.
\end{cases}
\end{align}
\subsection{Proof of \Cref{lem:nu}}\label{sub:proof_nu}
Throughout the proof, let $f$ as in (\ref{eq:22}). By definition, $\concentration{\nbins}{\nballs}$ is the unique positive zero of $f$ and for $x\in(0,1]$ we have $f(x)>0$, which implies \ref{lem:nu5}. 

In order to prove \ref{lem:nu7} we may assume that $\nballs\leq C\nbins^{1/3}$ for a suitable constant $C>0$. Now we get for $\nbins$ large enough
\begin{align*}
f(5/3)\lessorequal -1/9\log n+5/3\log C+5/3-13/6\log\left(5/3\right)<0.
\end{align*}
Together with (\ref{eq:24}) this implies $\concentration{\nbins}{\nballs}\leq 5/3$ for all $\nbins$ large enough, which yields \ref{lem:nu7}.

For \ref{lem:nu1} we may assume $\nballs=\Th{\nbins}$. Then we have for $a>0$
\begin{align*}
f\left(a\frac{\log n}{\log \log n}\right)=\left(1-a+\smallo{1}\right)\log n.
\end{align*}
Thus, \ref{lem:nu1} follows by (\ref{eq:24}). 

To prove \ref{lem:nu8}, we write $\nballs=c\nbins\log \nbins$ for $c=c(\nbins)=\smallo{1}$. We have for $a>0$ and $\nbins$ large enough
\begin{align*}
f(a\log \nbins)=\log \nbins\left(a\log c+a-a\log a+1\right)-1/2\log \log \nbins-1/2\log a<0,
\end{align*}
as $\log c\to -\infty$. Due to (\ref{eq:24}) this implies $\concentration{\nbins}{\nballs}<a\log \nbins$. As $a>0$ was arbitrary, we obtain $\concentration{\nbins}{\nballs}=\smallo{\log \nbins}$. 

For \ref{lem:nu6} we observe that by definition of $\Concentration=\concentration{\nbins}{\nballs}$
\begin{align*}
1=e\frac{\nballs}{\nbins\Concentration}\exp\left(\left(\log \nbins-1/2\log \Concentration\right)/\Concentration\right).
\end{align*}
Due to \ref{lem:nu8} we have $\left(\log \nbins-1/2\log \Concentration\right)/\Concentration=\smallomega{1}$, which yields $\Concentration=\smallomega{\nballs/\nbins}$.

For \ref{lem:nu4} we fix $\nbins\in \N$ and define $K(x):=\left(1+1/(2x)\right)\log x+\left(1-1/x\right)\log n-1$. It is easy to check that $K(\concentration{\nbins}{\nballs})=\log \nballs$ and $K$ is strictly increasing. This implies \ref{lem:nu4}.
 For \ref{lem:nu3} we let $\Concentration=\concentration{\nbins}{\nballs}$ and $\gamma\in\R$. Due to \ref{lem:nu1} we have  $\Concentration=\left(1+\smallo{1}\right)\log \nbins/\log\log \nbins$ and therefore we obtain 
\begin{align}\label{eq:1}
K\left(\Concentration+\gamma\right)-K(\Concentration)=\frac{\left(\log\log n\right)^2}{\log n}\left(\gamma +\smallo{1}\right).
\end{align}
On the other hand, we have 
\begin{align*}
K\big(\concentration{\nbins}{\nballs+d}\big)-K\big(\concentration{\nbins}{\nballs}\big)=\log (\nballs+d)-\log\nballs=\Th{d/\nballs}=\smallo{\left(\log \log \nbins\right)^2/\log \nbins}.
\end{align*}
Together with (\ref{eq:1}) this implies \ref{lem:nu3}, as $K$ is strictly increasing.

Similarly, we define for \ref{lem:nu9} the function $g(x):=\left(x+1/2\right)\log x-x$. Now \ref{lem:nu9} follows by the facts that $g(\specialconcentration{\nbins})=\log\nbins$ and $g$ is strictly increasing. 

Finally for \ref{lem:nu2}, let $\Concentration=\specialconcentration{\nbins}$ and $\gamma\in\R$. Then we have as $n \to \infty$
\begin{align}\label{eq:2}
g(\Concentration+\gamma)-g(\Concentration)=\left(\Concentration+1/2\right)\log \left(\frac{\Concentration+\gamma}{\Concentration}\right)+\gamma\log\left(\Concentration+\gamma\right)-\gamma \to
\begin{cases}
\infty &\text{if } \gamma>0\\
-\infty &\text{if } \gamma<0,
\end{cases}
\end{align} 
where we used $\Concentration=\smallomega{1}$ due to \ref{lem:nu1} and $\Concentration\log\left(\left(\Concentration+\gamma\right)/\Concentration\right)=\gamma+\smallo{1}$. We observe that $g(\specialconcentration{c\nbins})-g(\Concentration)=\log\left(c\nbins\right)-\log n=\Th{1}$. Together with (\ref{eq:2}) this implies $\specialconcentration{c\nbins}=\specialconcentration{n}+\smallo{1}$, as $g$ is strictly increasing.\qed
\end{document}